\numberwithin{equation}{section}
\numberwithin{subsection}{section}
\newtheorem*{namedtheorem}{\theoremname} \newcommand{\theoremname}{testing}
\newtheorem{theorem}{Theorem}[section]
\newtheorem{proposition}[theorem]{Proposition}
\newtheorem{proposition-definition}[theorem] {Proposition-Definition}
 \newtheorem{lemma}[theorem]{Lemma}
\newtheorem{question}[theorem]{Question}
\theoremstyle{definition} 
 \newtheorem{remark}[theorem]{Remark}
\newtheorem*{claim}{Claim}
\newcommand\Conj{\operatorname{Conj}}
\newcommand\diag{\operatorname{diag}} 
\newcommand\Char{\operatorname{char}}
\newcommand\Hypersurf{\operatorname{Hypersurf}} 
\newcommand\DHyp{\operatorname{DHyp}} 
\newcommand{\GL}{\mathrm{GL}} 
\newcommand{\PGL}{\mathrm{PGL}}
\newcommand\bbZ{\mathbb{Z}}
\newcommand\bP{\mathbf{P}}
\newcommand\arr{\ifinner\to\else\longrightarrow\fi}
\newcommand\arrto{\ifinner\mapsto\else\longmapsto\fi}
\def\displaytimes_#1{\mathrel{\mathop{\times}\limits_{#1}}}
\def\displayotimes_#1{\mathrel{\mathop{\bigotimes}\limits_{#1}}}
\newcommand\Span{\operatorname{Span}}
\newcommand\Spec{\operatorname{Spec}}
\newcommand\Ker{\operatorname{Ker}}
\newcommand\doublelong[2]{\mathbin{\xymatrix{{}\ar@<3pt>[r]^{#1}
\ar@<-3pt>[r]_{#2}&}}}
\newlength{\ignora}
\newcommand{\tr}{\operatorname{Tr}}
\newcommand{\Tr}{\operatorname{Tr}}
\newcommand{\SO}{\operatorname{SO}}
\newcommand{\Orth}{\operatorname{O}}
\newcommand{\Mat}{\operatorname{M}}
\newcommand{\rank}{\operatorname{rank}}
\newcommand{\bbP}{\mathbb{P}}
\newcommand{\bbA}{\mathbb{A}}
\begin{document}

\title[Determinantal hypersurfaces]
{On the dimension of the locus of\\determinantal 
hypersurfaces}

\author[Reichstein]{Zinovy Reichstein$^\dagger$}

\author[Vistoli]{Angelo~Vistoli$^\ddagger$}

\address[Reichstein]{Department of Mathematics\\ University of British
Columbia \\ Vancouver, B.C., Canada V6T 1Z2}
\email{reichst@math.ubc.ca}

\address[Vistoli]{Scuola Normale Superiore\\ Piazza dei Cavalieri 7\\ 56126
Pisa\\ Italy}
\email{angelo.vistoli@sns.it}

\begin{abstract} The characteristic polynomial $P_A(x_0, \dots, x_r)$
of an $r$-tuple $A := (A_1, \dots, A_r)$ of $n \times n$-matrices is 
defined as
\[ P_A(x_0, \dots, x_r) := \det(x_0 I + x_1 A_1 + \dots + x_r A_r) \, . \]
We show that if $r \geqslant 3$
and $A := (A_1, \dots, A_r)$ is an $r$-tuple of $n \times n$-matrices 
in general position,
then up to conjugacy, there are only finitely many $r$-tuples 
 $A' := (A_1', \dots, A_r')$ such that $p_A = p_{A'}$. Equivalently,
the locus of determinantal hypersurfaces of degree $n$ in $\bP^r$
is irreducible of dimension $(r-1)n^2 + 1$.
\end{abstract}

\subjclass[2000]{14M12, 15A22, 05A10}


\keywords{Determinantal hypersurfaces, matrix invariants, $q$-binomial 
coefficients}

\thanks{$^\dagger$Supported in part by an NSERC Discovery grant}
\thanks{$^\ddagger$Supported in part by research funds from 
the Scuola Normale Superiore}


\maketitle

\setcounter{tocdepth}{1} 

\section{Introduction}

Let $r, n \geqslant 2$ be integers, and $k$ be a base field.
Assume $\Char(k) = 0$ or $> n$.  Given an 
$r$-tuple $A := (A_1, \dots, A_r) \in \Mat_n^r$ 
of $n \times n$-matrices, we define {\em the characteristic polynomial}
of $A$ as
\[ P_A(x_0, \dots, x_r) := \det(x_0 I + x_1 A_1 + \dots + x_r A_r) \, , \]
where $I$ denotes that $n \times n$ identity matrix.
The purpose of this paper is to answer the following question,
due to B.~Reichstein. 

\begin{question} \label{q1}
For $(A_1, \dots, A_r)$ in general position in
$\Mat_n^r$, are there finitely many or infinitely many
conjugacy classes of $r$-tuples 
$A' := (A_1', \dots, A_r')$ such that $p_A = p_{A'}$?
\end{question}

To restate this question in geometric terms, consider the following 
diagram
\begin{equation} \label{e.main-diagram}
\xymatrix{ 
\Mat_n^r  \ar[dr]^P \ar[d]^{\pi} & &   \\
   Q_{r, n}= \Mat_n/\! \!/\PGL_n \ar[r]^-{\quad \overline{P} \quad} & \DHyp_{r, n} \ar@{^{(}->}[r] & 
 \Hypersurf_{r, n} \, .}  
\end{equation}
Here 
\begin{itemize}
\item $\Hypersurf_{r, n} \simeq \bbP^{\binom{r + n}{n} - 1}$ 
denotes the space of degree $n$
hypersurfaces in $\bbP^r$.  

\item
$Q_{r, n} := \Mat_n^r/\! \!/\PGL_n = \Spec k[\Mat_n^r]^{\PGL_n}$ denotes the 
categorical quotient space for the conjugation action of $\PGL_n$ on
$r$-tuples of $n \times n$-matrices.

\item
$\pi$ denotes the natural projection induced by the inclusion 
$k[\Mat_n^r]^{\PGL_n} \hookrightarrow k[\Mat_n^r]$.

\item
$P$ takes an $r$-tuple $A = (A_1, \dots, A_r)$ of $n \times n$ matrices
to the hypersurface in $\bbP^r$ cut out 
by the homogeneous polynomial $P_A(x_0, \dots, x_r)$ of degree $n$. 
Hypersurfaces of this form are called ``determinantal".

\item
$\DHyp_{r, n}$ denotes the closure of the image of $P$ in $\Hypersurf_{r, n}$
This is the ``locus of determinantal hypersurfaces" 
of degree $n$ in $\bbP^r$.
\end{itemize}

\begin{question} \label{q2} What is the dimension of $\DHyp_{r, n}$?
\end{question}

Questions~\ref{q1} and~\ref{q2} are closely related. Indeed, 
Question~\ref{q1} asks whether or not fibers of $\overline{P}$
in general position are finite, or equivalently, whether or not 
\[ \dim(\DHyp_{r, n}) = \dim(Q_{r, n}) \, ,  \] 
where
\[ \dim(Q_{r, n}) = \dim(\Mat_n^r) - \dim(\PGL_n) = (r-1)n^2 + 1 \, . \]
Our main result answers Questions~\ref{q1} and~\ref{q2} for $r \geqslant 3$.

\begin{theorem} 
\label{thm.main}
Assume $r \geqslant 3$. Then the map $\overline{P}$ is generically 
finite and separable. In particular,  
$\dim(\DHyp_{r, n}) = (r-1)n^2 + 1$, for any $n \geqslant 2$.
\end{theorem}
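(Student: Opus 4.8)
The plan is to show that the differential of $\overline{P}$ (equivalently, of $P$) is generically surjective onto the tangent space of $\DHyp_{r,n}$, and that $\dim \DHyp_{r,n} \geqslant (r-1)n^2 + 1$; combined with the trivial upper bound $\dim \DHyp_{r,n} \leqslant \dim Q_{r,n} = (r-1)n^2+1$ coming from the factorization $P = \overline{P} \circ \pi$, this forces equality and generic finiteness. Separability will follow if the differential of $\overline{P}$ is injective at a general point, which (since source and target have the same dimension) is the same as surjectivity of the differential of $P$ onto $T\DHyp_{r,n}$ together with the differential of $\pi$ being surjective — the latter holds on the stable locus where $\PGL_n$ acts with trivial stabilizer.

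So the technical heart is a computation with the differential of $P$ at a general $r$-tuple $A = (A_1, \dots, A_r)$. Writing $\widetilde{A}(x) = x_0 I + x_1 A_1 + \dots + x_r A_r$, Jacobi's formula gives, for a tangent vector $B = (B_1, \dots, B_r)$,
\[
dP_A(B) = \sum_{i=1}^r x_i \cdot \tr\!\bigl(\operatorname{adj}(\widetilde{A}(x))\, B_i\bigr),
\]
a polynomial of degree $n$ in $x_0, \dots, x_r$. The image of $dP_A$ is thus the $k$-span of the $r n^2$ polynomials $x_i \tr(\operatorname{adj}(\widetilde{A})\, E_{jk})$ as $i$ ranges over $1, \dots, r$ and $E_{jk}$ over the standard matrix units. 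One must show this span, modulo the kernel coming from the conjugation action (the image of the $\mathfrak{pgl}_n$-coadjoint directions, which contributes $n^2 - 1$ relations), has dimension $(r-1)n^2 + 1$ — i.e. the only linear relations among these $rn^2$ polynomials are the expected $n^2 - 1$ coming from $\mathrm{Lie}(\PGL_n)$. Since this is an open condition on $A$, it suffices to exhibit one $r$-tuple where it holds.

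The main obstacle, and where I expect the real work to be, is producing such an explicit $A$ and verifying the rank of the differential. A natural choice is to take $A_1, \dots, A_r$ diagonal, or nearly so — say $A_1 = \diag(\lambda_1, \dots, \lambda_n)$ generic and $A_2, \dots, A_r$ generic diagonal as well, then perturb — so that $\widetilde{A}(x)$ is diagonal with entries $\ell_j(x) = x_0 + \lambda_j^{(1)} x_1 + \dots + \lambda_j^{(r)} x_r$ linear forms, and $\operatorname{adj}(\widetilde{A})$ is diagonal with $j$-th entry $\prod_{m \neq j} \ell_m(x)$. In that case the off-diagonal directions $E_{jk}$ ($j \neq k$) contribute nothing (the trace vanishes), so one genuinely needs $A_2, \dots, A_r$ to have off-diagonal entries; the cleanest route is probably a deformation/semicontinuity argument starting from the diagonal case, where the span of the $r$ polynomials $x_i \prod_{m\neq j}\ell_m(x)$ (for $i=1,\dots,r$, $j = 1,\dots,n$) can be analyzed directly — this is exactly where the $q$-binomial coefficients advertised in the keywords enter, counting dimensions of spaces spanned by products of linear forms. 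One shows the diagonal tuples already sweep out a subvariety of $\DHyp_{r,n}$ of the full dimension $(r-1)n^2+1$ when $r \geqslant 3$ (the inequality $r \geqslant 3$ being essential: for $r=2$ the dimension count collapses and determinantal hypersurfaces are special), and then a tangent-space computation at a general — necessarily non-diagonal — point upgrades this to generic finiteness and separability. The separability statement specifically requires characteristic $0$ or $>n$, which is used to ensure $\operatorname{adj}$ and the relevant traces behave as in characteristic zero and that $\PGL_n$ is smooth with the expected Lie algebra.
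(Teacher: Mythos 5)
Your high-level framework is correct and is the same one the paper uses: identify $T_A(\Mat_n^r)$ with $\Mat_n^r$, compute $dP_A(B) = \tr(\operatorname{adj}(\widetilde{A})(x_1 B_1 + \dots + x_r B_r))$ via Jacobi's formula, observe that the tangent space to the $\PGL_n$-orbit (the commutator directions) lies in $\ker dP_A$ and has dimension $n^2 - 1$ generically, and then try to exhibit a single $A$ where $\dim \ker dP_A = n^2 - 1$ exactly, which by upper-semicontinuity of rank forces the generic rank to equal $(r-1)n^2+1$.

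The gap is in the concrete step. Your proposed witness --- $r$-tuples of \emph{diagonal} matrices, followed by a ``deformation/semicontinuity'' argument --- cannot work, and in fact you half-notice this yourself before asserting the opposite. At a diagonal tuple $A$, the matrix $\operatorname{adj}(\widetilde{A}(x))$ is diagonal, so $\tr(\operatorname{adj}(\widetilde{A})E_{jk}) = 0$ for all $j \neq k$; hence every purely off-diagonal tangent vector lies in $\ker dP_A$, giving $\dim \ker dP_A \geqslant rn(n-1)$, which is strictly larger than $n^2-1$ for all $r \geqslant 2$, $n \geqslant 2$. Equivalently, the image of $dP_A$ at a diagonal tuple is spanned by the $rn$ polynomials $x_i \prod_{m\neq j}\ell_m(x)$ and so has dimension at most $rn < (r-1)n^2+1$. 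Your assertion that ``diagonal tuples already sweep out a subvariety of $\DHyp_{r,n}$ of the full dimension $(r-1)n^2+1$'' is therefore false: they sweep out the locus of totally split hypersurfaces, which has dimension at most $rn$. Semicontinuity of rank does not repair this, because it runs in the wrong direction: knowing the rank at a \emph{bad} point only gives a weak lower bound for the generic rank, and you never produce a point where the rank is actually $(r-1)n^2+1$. The whole difficulty of the theorem is hidden in this step.

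The paper resolves it by first reducing to $r=3$ via Procesi's generation theorem for the field of matrix invariants, and then choosing the very specific triple $A_1 = \diag(1,q,\dots,q^{n-1})$, $A_2$ the cyclic permutation matrix, $A_3 = A_1 A_2$, where $q$ is a primitive $n$th root of unity. The key feature of this triple is the $q$-commutation relation $A_2A_1 = qA_1A_2$, which makes $\ker dP_A$ invariant under a $(\bbZ/n\bbZ)^2$-action and decomposes $\Mat_n^3$ into $n^2$ three-dimensional character spaces; the linear system defining $\ker dP_A$ then splits into $n^2$ small systems that can be solved explicitly using the $q$-trinomial (Sch\"utzenberger) formula. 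This is also where the $q$-binomial coefficients actually enter --- not, as you guessed, in counting spans of products of linear forms from a diagonal degeneration, but through the non-commutativity of $A_1$ and $A_2$. Without some structural device of this kind, the linear system cutting out $\ker dP_A$ is badly overdetermined and not tractable by inspection, which is exactly why the diagonal-then-perturb route stalls.
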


Several remarks are in order.

\smallskip
(1) A classical theorem of G.~Frobenius~\cite[$\S$7.1]{frobenius} 
asserts 
that the only linear transformations $T \colon \Mat_n \to \Mat_n$
preserving the determinant function are of the form $A \to P X Q$ or
$A \to P X^t Q$, where $X^t$ denotes the transpose of $X$, and $P$ and $Q$
are fixed $n \times n$ matrices, such that $\det(P) \det(Q) = 1$. 
(For modern proofs of this theorem, further references, 
and generalizations, see~\cite{dieudonne}, \cite[Theorem 2]{mm}, 
\cite[Theorem 4.2]{waterhouse}, \cite[Corollary 8.9]{bgl}.)
In the case where $r = n^2 -1$, Frobenius's theorem tells us that
the fiber of $\overline{P}$ contains exactly two points corresponding to
the conjugacy classes of $(A_1, \dots, A_r)$ and $(A_1^t, \dots, A_r^t)$, 
where $A^t$ denotes the transpose of $A$; see Lemma~\ref{lem.frobenius}.
In Section~\ref{sect.frobenius}
we will show that the same is true for any $r \geqslant n^2 - 1$.

\smallskip
(2) In the case where $n= r= 3$, Theorem~\ref{thm.main} 
is equivalent to the following assertion:
a general hypersurface of degree $3$ in
$\bbP^3$ is determinantal. Equivalently, the map  
$P \colon \Mat_3^3 \to \Hypersurf_{3, 3} \simeq \bbP^{19}$ is 
dominant. This result goes back to (at least) 
H.~Grassmann~\cite{grassmann}; for a modern 
proof (in arbitrary characteristic), 
see~\cite[Corollary 6.4]{beauville}.  

\smallskip
(3) In the case, where $r = 3$ and $n = 4$, Theorem~\ref{thm.main} 
is equivalent to the assertion of that determinantal quartic 
hypersurfaces in $\bbP^3$ form a codimension $1$ 
locus in $\Hypersurf_{3, 4} \simeq \bbP^{34}$. 
Over the field of complex numbers this is proved 
in~\cite[Example 4.2.23]{dolgachev}. 

\smallskip
(4) We do not know what the degree of $\overline{P}$ is in general; our proof 
of Theorem~\ref{thm.main} sheds no light on this question. As we mentioned
above, if $r \geqslant n^2 -1$, the general fiber of $\overline{P}$ 
consists of exactly two points corresponding to
the conjugacy classes of $(A_1, \dots, A_r)$ and $(A_1^t, \dots, A_r^t)$ 
(see Theorem~\ref{thm.frobenius}) and thus $\deg(\overline{P}) = 2$.
An interesting (and to the best of our knowledge, open)
question is whether or not
$\deg(\overline{P}) = 2$ for every $n \geqslant 2$ and $r \geqslant 4$.
Note however, that this fails for $r = 3$. Indeed,
if $r = n = 3$, then $\deg(\overline{P}) = 72$;
see~\cite{grassmann},~\cite[Corollary 6.4]{beauville} 
or~\cite[Theorem 9.3.6]{dolgachev}. 

\smallskip
(5) Theorem~\ref{thm.main} fails for $r = 2$, as long as
$n \geqslant 3$. Indeed, in this case
\[ \dim(Q_{2, n}) = n^2 + 1 >
\binom{n + 2}{2} - 1 = \dim(\Hypersurf_{2, n}) , \]
so the fibers of $\overline{P}$ cannot be finite.
In fact, this setting has been much studied, 
both from the theoretical point of view and in connection to
applications to control theory.  In particular, it is well 
known that the map 
$\overline{P} \colon Q_{2, n} \to \Hypersurf_{2, n}$ 
is dominant, and the points 
of the fiber of $\overline{P}$ over 
a general plane curve $C$ of degree $n$ are in a natural
bijective  correspondence with line bundles of degree
$\dfrac{n(n-1)}{2}$ on $C$.  For details
and further references, see~\cite{ct}, \cite{vinnikov}, 
\cite[Section 3]{beauville}, \cite[Section 4.1]{dolgachev}, \cite{neretin}.

\smallskip
(6) On the other hand, Theorem~\ref{thm.main} remains true for $r = n = 2$. 
Indeed, in this case the $k$-algebra $k[Q_{2, n}] = k[\Mat_2^2]^{\PGL_n}$
is generated by five algebraically independent elements, 
$\tr(A_1)$, $\tr(A_2)$, 
$\det(A_1)$, $\det(A_2)$ and $\tr(A_1 A_2)$; 
see,~\cite[Theorem 2.1]{procesi},
\cite[p. 20]{herstein} or \cite[Lemma 1(1)]{fhl}. 
One easily checks that these five elements lie in the 
$k$-algebra generated by the coefficients
of $\det(x_0 I + x_1 A_1 + x_2 A_2)$. We conclude that for $r = n = 2$
the map $\overline{P} \colon \Mat_2^2/\! \!/\PGL_2 \to \Hypersurf_{2, 2} 
\simeq \bbP^5$ is, in fact, a birational isomorphism, i.e.,
$\deg(\overline{P}) = 1$. If $r, n \geqslant 2$ but $(n, r) \neq (2, 2)$, 
then $(A_1, \dots, A_r)$ and $(A_1^t, \dots, A_r^t)$ are not conjugate, for
$(A_1, \dots, A_r) \in \Mat_n^r$ in general position (see, e.g., 
\cite[Remark 1 on p.~73]{reichstein}) and hence,
$\deg(\overline{P}) \geqslant 2$.

\smallskip
(7) The fact that $\overline{P} \colon \Mat_n^r \to \Hypersurf_{r, n}$ 
is dominant if and only if $r = 2$ or $r = n = 3$ was known
to L.~E.~Dickson; see~\cite{dickson}. Dickson also noted that
the determinantal form 
\[ \det(A_0 x_0 + \dots + A_r x_r)
 \sum_{i_0 + \dots + i_r = n} a_{i_0, \dots, i_r} 
x_0^{i_0} \dots x_r^{a_r} \, , \]
``involves no more than $(r-1) n^2 + 2$ parameters", i.e.,
the transcendence degree of the field generated by the coefficients
$a_{i_1}, \dots, a_{i_r}$ over $k$ is $\leqslant (r-1)n^2 + 2$;
see~\cite[Theorem 6]{dickson}.  
Our Theorem~\ref{thm.main} implies that this bound is, in fact, 
attained for the generic determinantal form. \footnote{The reason
for the discrepancy between $(r-1)n^2 + 2$ in Dickson's Theorem 6 and
$(r-1)n^2 + 1$ in our Theorem~\ref{thm.main} is that we take
$A_0 = I$. The ``extra" parameter in Dickson's setting is $\det(A_0)$.}

\smallskip
Our standing assumption on the base field $k$ is that $\Char(k) = 0$ or 
$> n$. Among other things, this allows us to use Newton's 
formulas to express the coefficients of the characteristic polynomial 
of an $n \times n$-matrix $X$ in terms of $\tr(X), \tr(X^2), \ldots, \tr(X^n)$.
Our main results are of a geometric nature, in the sense that in 
the course of proving them we may replace $k$ by a larger field. 
In particular, we may usually assume without loss of generality 
that $k$ is algebraically closed.
We do not know to what extent Theorem~\ref{thm.main} remains valid in
the case where $0 < \Char(k) \leqslant n$; our argument breaks 
down in this setting.

A modern approach to the study of determinantal hypersurfaces 
is based on the fact that a hypersurface $X \subset \bbP^n$ 
is determinantal if and only if $X$ carries an Ulrich sheaf of rank $1$; 
see~\cite{beauville} in the case, where $X$ is smooth,
and \cite{es} in general. We have not been able to prove
Theorem~\ref{thm.main} using this approach, even though 
this may well be possible (one complication is that 
for $r > 3$ every determinantal hypersurface 
is singular). The proof we give here is entirely elementary.

\subsection*{Acknowledgments} We would like to thank Boris 
Reichstein for bringing Question~\ref{q1} to our attention. 
We are also grateful to 
Arnaud Beauville for helpful comments, and to the referees 
for calling our attention 
to~\cite{es} and encouraging us to include a proof
of Theorem~\ref{thm.frobenius} in this paper.
We are in debt to Marian Aprodu for his interest 
in our work and his very useful observations.

\section{A general strategy for the proof of Theorem~\ref{thm.main}}
\label{sect.strategy}

The first step is to reduce Theorem~\ref{thm.main}
to the case where $r = 3$. We will do this in Section 3, then
assume that $r = 3$ for the rest of the proof.  Clearly 
\begin{equation} \label{e.dim>}
\dim(\DHyp_{3, n}) \leqslant \dim(Q_{3, n}) = 2n^2 + 1,
\end{equation}
since the morphism $\overline{P} \colon Q_{3, n} \to \DHyp_{3, n}$ is
dominant, by definition. The following lemma will supply a key ingredient 
for our proof of Theorem~\ref{thm.main}.

\begin{lemma} \label{lem.key}
There exists a triple of $n \times n$ matrices $A = (A_1, A_2, A_3)
\in \Mat_n^3$ such that the differential $dP_{|A}$ of $P$ at $A$ has
rank $2n^2 + 1$. 
\end{lemma}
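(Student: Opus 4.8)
The plan is to write the differential $dP_{|A}$ in closed form and then exhibit a single triple $A$ at which its rank is as large as it can possibly be. By the Jacobi formula for the derivative of a determinant, for a tangent vector $B=(B_1,B_2,B_3)\in T_A\Mat_n^3=\Mat_n^3$ one has
\[
dP_{|A}(B)\;=\;\tr\bigl(\operatorname{adj}(M_A(x))\cdot(x_1B_1+x_2B_2+x_3B_3)\bigr),
\]
where $M_A(x):=x_0I+x_1A_1+x_2A_2+x_3A_3$ and $\operatorname{adj}$ is the adjugate matrix, whose entries are the signed $(n-1)\times(n-1)$ minors of $M_A(x)$ and hence homogeneous of degree $n-1$ in $x_0,\dots,x_3$; thus $dP_{|A}(B)$ is homogeneous of degree $n$. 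Since $P$ is invariant under conjugation by $\PGL_n$, the subspace $\bigl\{\,([C,A_1],[C,A_2],[C,A_3]):C\in\Mat_n\,\bigr\}\subseteq\Mat_n^3$ lies in $\ker dP_{|A}$; when the only matrices commuting with all of $A_1,A_2,A_3$ are the scalars --- which holds for $A$ in general position --- this ``commutator subspace'' has dimension $n^2-1$, because $C\mapsto([C,A_i])_i$ then has kernel exactly the scalars. Hence $\rk dP_{|A}\leqslant 3n^2-(n^2-1)=2n^2+1$ for every $A$, and the lemma is equivalent to the assertion that for at least one $A$ the kernel of $dP_{|A}$ is no larger than the commutator subspace, i.e.\ that every $B$ satisfying $\sum_{i=1}^3 x_i\,\tr\bigl(\operatorname{adj}(M_A(x))B_i\bigr)=0$ has the form $B_i=[C,A_i]$. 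Since the condition $\rk dP_{|A}\geqslant 2n^2+1$ is Zariski open in $A$, it is enough to verify it at one conveniently chosen point; it then holds for $A$ in general position, which is also what is needed for the separability statement in Theorem~\ref{thm.main}.

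For the construction I would take $A_1=\diag(t_1,\dots,t_n)$ with $t_1,\dots,t_n$ pairwise distinct and $A_2,A_3$ generic; then the triple is regular and one may work in the eigenbasis of $A_1$. Conjugation by the diagonal torus equips $\Mat_n$ with the weight grading $\Mat_n=\bigoplus_d\Mat_n^{(d)}$, where $\Mat_n^{(d)}$ is spanned by the elementary matrices $E_{ab}$ with $b-a=d$. Using this grading together with the expansion of the adjugate in powers of $x_0$ --- for instance the resolvent expansion $\operatorname{adj}(x_0I+R)=\sum_{j=0}^{n-1}x_0^{n-1-j}p_j(R)$ with $R=x_1A_1+x_2A_2+x_3A_3$ and $p_0(R)=I$ --- one collects the vanishing condition $\sum_i x_i\,\tr(\operatorname{adj}(M_A(x))B_i)=0$ monomial by monomial in $x_0,\dots,x_3$. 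The leading term already forces $\tr B_i=0$ for $i=1,2,3$, and the remaining coefficients produce a sparse linear system on the entries of the $B_i$ whose combinatorics --- minors of a diagonal-plus-generic matrix, sorted by the weight grading --- is governed by Gaussian binomial coefficients, the variable $q$ keeping track of the weight. If a further toric degeneration of $A_2,A_3$ is needed to make the system genuinely triangular, I would choose it so that the kernel dimension is unchanged in the limit and then analyze the resulting monomial system directly.

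The crux, and the step I expect to be the main obstacle, is the dimension count for this linear system: one must show that its solution space has dimension exactly $n^2-1$. The inequality ``$\geqslant n^2-1$'' is automatic, since the commutator directions are always solutions, so everything comes down to the matching upper bound, and this is precisely where $q$-binomial coefficients enter: the number of monomials available in a given weight is a Gaussian binomial coefficient $\binom{\cdot}{\cdot}_q$, the number of independent equations imposed in that weight is another, and the assertion that these balance so as to leave only the commutator solutions amounts to a $q$-binomial identity --- one for each weight --- to be established by a generating-function or bijective argument. Granting that identity, $\ker dP_{|A}$ coincides with the commutator subspace at the chosen $A$, so $\rk dP_{|A}=2n^2+1$ there and hence, by openness, for $A$ in general position, which proves the lemma.
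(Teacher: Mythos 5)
Your reduction of the lemma to a kernel computation is correct: by Jacobi's formula you get the same expression for $dP_{|A}$ that the paper obtains in Lemma~\ref{lem.differential}(a), and your observation that the commutator subspace $\{([C,A_1],[C,A_2],[C,A_3]):C\in\Mat_n\}$ always lies in $\ker dP_{|A}$ (so that $\rk dP_{|A}\leqslant 2n^2+1$ whenever the centralizer of the triple is the scalars) is a clean way to see that the claimed rank is best possible. You also correctly identify the shape of the strategy --- evaluate at a single special point, use a grading to block-diagonalize the linear system, expect $q$-binomial coefficients --- and the reduction-by-openness step is fine.

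However, there is a genuine gap: the crux of the matter, the dimension count showing that at \emph{some} specific $A$ the kernel is no larger than the commutator space, is acknowledged as ``the main obstacle'' and is deferred to an unspecified ``$q$-binomial identity'' rather than proved. Moreover, the specific choice of $A$ you propose is unlikely to make that step tractable. Taking $A_1=\diag(t_1,\dots,t_n)$ with \emph{generic} distinct eigenvalues and $A_2,A_3$ generic gives only the coarse $\ZZ$-grading of $\Mat_n$ by $b-a$, under which the blocks of the linear system remain large and entangled, and a ``further toric degeneration'' of $A_2,A_3$ risks enlarging the kernel by semicontinuity rather than controlling it. The paper's resolution is to pick a much more rigid triple: $A_1=\diag(1,q,\dots,q^{n-1})$ with $q$ a \emph{primitive $n$th root of unity}, $A_2$ the cyclic permutation matrix, and $A_3=A_1A_2$, so that $A_2A_1=qA_1A_2$ and $A_1^n=A_2^n=I$. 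This is what produces a $(\ZZ/n\ZZ)^2$-grading on $\Mat_n^3$ via conjugation by $A_1$ and $A_2$, decomposing the space into $n^2$ \emph{three-dimensional} character spaces; $\ker dP_{|A}$ is then graded and the linear system reduces to solving independent $3$-variable systems in each character space. The $q$-binomial coefficients here come from Sch\"utzenberger's $q$-binomial theorem for the skew-commuting pair $(A_1,A_2)$, not from a toric limit, and the resulting arithmetic (Lemma~\ref{lem.ratios} and the final case analysis) is concrete and finite, not an open-ended combinatorial identity. In short, your plan points in the right direction but stops short of the special structure that actually makes the computation close, and that structure is not recoverable from ``diagonal plus generic plus degeneration'' without essentially rediscovering the paper's choice.
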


Once Lemma~\ref{lem.key} is established, we know that 
$\rank \, dP_{|B} \geqslant 2n^2 + 1$ 
for $B \in \Mat_n^3$ is general position. Hence,~\eqref{e.dim>} 
is an equality.  Moreover, for $B \in \Mat_n^3$ 
in general position 
\[ \rank \, d \overline{P}_{|\pi(B)} \geqslant
\rank \, d P_{| B} = 2n^2 + 1 \, .  \]
Since $\dim(Q_{3, n}) = \dim(\DHyp_{3, n}) = 2n^2 + 1$,
we conclude that for $B \in \Mat_3^r$ in general position,
$d\overline{P}_{|\pi(B)}$ is an isomorphism. In other words,
$\overline{P}$ is generically finite and separable, as desired.

Our proof of Lemma~\ref{lem.key} will be structured as follows.
In Section~\ref{sect.tangent-space} we will exhibit a homogeneous system 
of linear equations cutting out $\Ker(dP_{|A})$ 
inside the tangent space $T_A(\Mat_n^3)$
(which we identify with $\Mat_n^3$) in Section~\ref{sect.tangent-space}.
We will do this for any triple $A = (A_1, A_2, A_3) \in \Mat_n^3$
such that the linear span of $A_1$, $A_2$ and $A_3$ in $\Mat_n$
contains a matrix with distinct eigenvalues; 
see Lemma~\ref{lem.differential}(b).
Our goal will be to prove Lemma~\ref{lem.key} by showing that
$\dim \, \Ker(dP_{|A}) = n^2 - 1$.
The system of linear equations we obtain, cutting out $\Ker(dP_{|A})$
in $\Mat_n^3$, is rather complicated (in particular, it is badly 
overdetermined). For this reason we have not been able to compute 
the dimension of $\Ker(dP_{|A})$ for an arbitrary triple 
$A = (A_1, A_2, A_3) \in M_n^3$ whose linear span contains
a matrix with distinct eigenvalues.
However, for the particular triple $A = (A_1, A_2, A_3)$ 
defined in~\eqref{e.A1-3}, the kernel of $dP_{|A}$ 
carries a $(\bbZ/n \bbZ)^2$-grading, i.e., 
remains invariant under a certain linear action of 
the finite abelian group $G := (\bbZ/n \bbZ)^2$ on $M_n^3$; see
Section~\ref{sect.grading}.  This will allow us to 
decompose $\Mat_n^3$ as a direct sum of $n^2$ three-dimensional 
character spaces, and verify that $\Ker(dP_{|A})$ has the desired dimension,
$n^2 - 1$, by solving our linear system in each character space. 
This computation, completing the proof of Lemma~\ref{lem.key}
(and thus of Theorem~\ref{thm.main}), will be 
carried out in Sections~\ref{sect.grading} and \ref{sect.final}. 
 It relies on properties of 
$q$-binomial and trinomial coefficients, which are recalled 
in~Section~\ref{sect.skew}.

\section{Reduction to the case, where $r = 3$}
\label{sect.r=3}

Throughout this section, we will fix $n \geqslant 2$ and
denote the map
\[ \Mat_n^r/\! \!/\PGL_n \to \DHyp_{r, n} \]
in diagram~\eqref{e.main-diagram} by $\overline{P}(r, n)$. 

\begin{proposition} \label{prop.r+} Assume $r \geqslant 3$. 
If the morphism
$\overline{P}(r, n)$ is generically finite and separable, 
then so is $\overline{P}(r + 1, n)$.
\end{proposition}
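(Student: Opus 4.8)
The plan is to relate the generic fiber of $\overline{P}(r+1,n)$ to that of $\overline{P}(r,n)$ by restricting an $(r+1)$-tuple to a generic $r$-dimensional subspace of the span of its matrices. Concretely, an $(r+1)$-tuple $A'=(A'_1,\dots,A'_{r+1})$ determines a linear map $\bbA^{r+1}\to\Mat_n$, $(x_1,\dots,x_{r+1})\mapsto x_1A'_1+\dots+x_{r+1}A'_{r+1}$, and its characteristic polynomial $P_{A'}$ is a degree-$n$ form on $\bbA^{r+2}$ (including the $x_0$ coordinate for $I$). Given a linear embedding $\iota\colon\bbA^{r}\into\bbA^{r+1}$ fixing the $x_0$-axis direction appropriately, composing produces an $r$-tuple $A'\circ\iota$, and $P_{A'\circ\iota}$ is simply the restriction of $P_{A'}$ along the induced embedding $\bbA^{r+1}\into\bbA^{r+2}$. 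Thus a hyperplane section of a determinantal hypersurface in $\bbP^{r+1}$ is a determinantal hypersurface in $\bbP^{r}$, and conversely, recovering $A'$ from its restrictions to enough generic hyperplanes is the content of what must be proven.

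First I would set up the two relevant incidence correspondences. Over the variety of pairs (an $(r+1)$-tuple $A'$, a generic codimension-one coordinate-type subspace $H\subset\bbA^{r+1}$) one has the restriction $(A',H)\mapsto A'|_H\in\Mat_n^{r}$, and on hypersurfaces the corresponding operation sends (a degree-$n$ form $F$ on $\bbP^{r+1}$, a hyperplane $L$ through no bad locus) to $F|_L$. Since $r\geqslant 3$, choosing two generic hyperplanes $L_1,L_2$ in $\bbP^{r+1}$ whose intersection still has dimension $r-1\geqslant 2$, the pair $(F|_{L_1},F|_{L_2})$ together with the agreement along $L_1\cap L_2$ determines $F$: a degree-$n$ form on $\bbP^{r+1}$ is determined by its restrictions to two hyperplanes in general position provided their intersection is positive-dimensional, because the restriction map on $\mathrm{H}^0(\cO(n))$ to $\mathrm{H}^0(L_1,\cO(n))\oplus\mathrm{H}^0(L_2,\cO(n))$ is injective when $r+1\geqslant 2$. (This injectivity is the elementary point where $r\geqslant 3$, equivalently $r-1\geqslant 2$, is what we need to keep the common intersection from being a single point; one checks it directly from the exact sequence $0\to\cO(n-1)\xarr{\ell}\cO(n)\to\cO_L(n)\to0$ twice.)

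Next, using the inductive hypothesis that $\overline{P}(r,n)$ is generically finite and separable, the fiber of $\overline{P}(r,n)$ over a general determinantal hypersurface in $\bbP^{r}$ is a finite reduced set of conjugacy classes. Given a general $(r+1)$-tuple $A'$, its restrictions to two general hyperplanes $L_1,L_2$ are general $r$-tuples (this requires a dimension count showing the restriction map is dominant, which follows because the span of the $A'_i$ is all of a generic $r$-subspace of $\Mat_n$ for $A'$ general and $r\geqslant 3$), so each has a finite fiber under $\overline{P}(r,n)$; hence there are finitely many candidates for $A'|_{L_1}$ and for $A'|_{L_2}$ up to conjugacy. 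A candidate reconstruction of $A'$ is then determined (again up to finitely much ambiguity and up to simultaneous conjugacy) by gluing a representative of the $L_1$-datum to a representative of the $L_2$-datum along the common $(L_1\cap L_2)$-restriction: conjugating so the two agree on the overlap uses that the fiber of $\overline{P}(r-1,n)$ — or rather the centralizer statement for a general $(r-1)$-tuple, which has trivial $\PGL_n$-stabilizer — pins down the gluing matrix up to finitely many choices. Counting: the fiber of $\overline{P}(r+1,n)$ over $P_{A'}$ injects into a finite set, so it is finite. Separability/genericity then follows from the differential: $d\overline{P}(r+1,n)$ at $\pi(A')$ is injective because its composition with restriction to a generic $L$ is $d\overline{P}(r,n)$ at $\pi(A'|_L)$, which is an isomorphism onto its image by the inductive separability, and the kernel of restriction is killed by using a second hyperplane. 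Equivalently one argues $\dim\DHyp_{r+1,n}=\dim Q_{r+1,n}=(r)n^2+1$: the upper bound is automatic, and the lower bound follows by exhibiting $\DHyp_{r+1,n}$ as dominating, via hyperplane sections, a space built from $\DHyp_{r,n}$ of the right dimension, using $\dim\DHyp_{r,n}=(r-1)n^2+1$ plus the $n^2$ parameters of the extra matrix $A'_{r+1}$.

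The main obstacle I expect is the gluing step: showing that an $r$-tuple and its restrictions to hyperplanes interact rigidly enough that two generic hyperplane-sections, plus the compatibility on their intersection, reconstruct the $(r+1)$-tuple up to only finitely many conjugacy classes. The subtlety is that "conjugate on $L_1$" and "conjugate on $L_2$" are statements about two separate $\PGL_n$-orbits, and one must conjugate one representative to match the other on the overlap $L_1\cap L_2$ without extra moduli — this is exactly where one invokes that a general $(r-1)$-tuple (here $r-1\geqslant 2$) has no nontrivial simultaneous conjugating automorphisms, so the matching element of $\PGL_n$ is unique once it exists, and existence holds for a candidate that actually comes from a genuine $(r+1)$-tuple. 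Handling this carefully, rather than the essentially formal differential/dimension bookkeeping, is the heart of the argument. A cleaner alternative, which I would try first, is to avoid gluing entirely: realize $Q_{r+1,n}\dashrightarrow Q_{r,n}\times\bbA^{n^2}$ birationally (restrict to the hyperplane $x_{r+1}=0$, remember $A'_{r+1}$), show the composite with $\overline{P}(r+1,n)$ factors through $\overline{P}(r,n)\times\id$ up to a dominant correction, and deduce generic finiteness of $\overline{P}(r+1,n)$ directly from that of $\overline{P}(r,n)$ by comparing fiber dimensions — the remaining point being that remembering $A'_{r+1}$ and the degree-$n$ form $P_{A'}$ recovers $P_{A'}$ restricted to $x_{r+1}=0$ and its derivative in the $x_{r+1}$-direction, i.e.\ enough jet data to pin down $A'_{r+1}$'s interaction with the rest, which is again an injectivity-of-restriction statement needing $r\geqslant 3$.
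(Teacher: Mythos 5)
Your proposal takes a genuinely different route from the paper. The paper's proof is purely algebraic: it restates the claim as asserting that the field extension $K_{r+1,n}/K'_{r+1,n}$ is finite and separable, then invokes Procesi's result (Lemma~\ref{lem.procesi}) that $K_{r+1,n}$ is generated over $k$ by $\tr(M_i)$ and $\tr(M_i A_j)$ with $M_i$ monomials in $A_1, A_2$ only and $j=3,\dots,r+1$, so that each generator lives in $K_{2,n}$ or in some $K_{3,n}\subset K_{r,n}$ and is therefore algebraic separable over $K'_{r,n}\subset K'_{r+1,n}$ by the inductive hypothesis plus the symmetry in $j$. This is exactly the algebraic substitute for the geometric ``gluing'' you want to perform, and it does all the work in three lines. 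Your hyperplane-section approach is plausible in outline but falls short of a proof in two places.

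First, there is a concrete false statement. You claim that the restriction
$\mathrm{H}^0(\bbP^{r+1},\cO(n)) \to \mathrm{H}^0(L_1,\cO(n)) \oplus \mathrm{H}^0(L_2,\cO(n))$
is injective. It is not: its kernel consists of forms vanishing on $L_1 \cup L_2$, i.e.\ forms divisible by $\ell_1\ell_2$, so the kernel is $\ell_1\ell_2\cdot\mathrm{H}^0(\cO(n-2))\neq 0$ since $n\geqslant 2$ throughout the paper. (Your own two-step exact sequence computation, done correctly, gives precisely this kernel.) In fact you do not need this statement, because the reconstruction that matters happens at the level of tuples, where restriction to the two coordinate hyperplanes $\{x_{r+1}=0\}$ and $\{x_r=0\}$ trivially recovers the full $(r+1)$-tuple; but as written the claim is both wrong and presented as load-bearing.

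Second, and more importantly, the part you yourself flag as ``the heart of the argument'' --- the gluing of the two finite fibers under $\overline{P}(r,n)$ along the common $(r-1)$-tuple, and the companion differential/separability computation --- is left as a sketch. The gluing needs the statement that a general $(r-1)$-tuple (here $r-1\geqslant 2$) has trivial $\PGL_n$-stabilizer, which is true but must be invoked precisely and uniformly as one passes through the candidate conjugacy classes; and the differential argument requires control of the kernel of $d\pi$ at the quotient, which is delicate at the level you write it. None of these obstacles is fatal --- a careful version of your outline could very likely be made to work --- but what you have is a plan, not a proof, whereas the paper's use of Procesi's generation theorem bypasses the geometry of hyperplane sections entirely and makes the induction essentially formal.
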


Let $K_{r, n}:= k(\Mat_n^r)^{\PGL_n}$ be the field of rational functions
on $\Mat_n^r/\! \!/\PGL_n$ and $K_{r, n}'$ be the subfield of $K_{r, n}$
generated by the coefficients of the characteristic polynomial
\[ (A_1, \dots, A_r) \quad \mapsto \quad 
\det(x_0 I + x_1 A_1 + \dots + x_r A_r) \, . \]
Clearly $K_{r, n}'$ is the field of rational functions on
$\DHyp_{r, n}$ and the inclusion of function fields 
$P^* \colon k(\DHyp_{r, n}) \hookrightarrow k(Q_{r, n})$ is 
the natural inclusion $K_{r, n}' \hookrightarrow K_{r, n}$.
Thus Proposition~\ref{prop.r+} can be restated, in purely 
algebraic terms, as follows.

\begin{proposition} \label{prop.r++} 
Assume $r \geqslant 3$. If the field extension $K_{r, n}/K_{r, n}'$ is 
finite and separable, then so is
$K_{r + 1, n}/K_{r+1, n}'$.  
\end{proposition}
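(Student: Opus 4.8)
\emph{Strategy.} I would study the morphism that forgets the last matrix. The projection $\Mat_n^{r+1} = \Mat_n^r \times \Mat_n \to \Mat_n^r$ is $\PGL_n$-equivariant, hence descends to $Q_{r+1,n} \to Q_{r,n}$; on the other side, restricting a degree $n$ form to the hyperplane $\{x_{r+1} = 0\}$ gives a dominant rational map $\DHyp_{r+1,n} \dashrightarrow \DHyp_{r,n}$, and the square formed by these together with $\overline{P}(r+1,n)$ and $\overline{P}(r,n)$ commutes. Passing to function fields produces compatible inclusions $K_{r,n}' \hookrightarrow K_{r+1,n}'$ and $K_{r,n} \hookrightarrow K_{r+1,n}$. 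Put $M := K_{r,n} \cdot K_{r+1,n}'$, the compositum inside $K_{r+1,n}$. Since $K_{r,n}/K_{r,n}'$ is finite and separable by hypothesis and $K_{r,n}' \subseteq K_{r+1,n}'$, the extension $M/K_{r+1,n}'$ is finite and separable. So the proposition reduces to proving that $K_{r+1,n}/M$ is finite and separable.

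\emph{Identifying $K_{r+1,n}$ over $K_{r,n}$.} A generic $r$-tuple of $n \times n$ matrices generates $\Mat_n$ as an algebra (classical, and using only $r \geq 2$), hence has trivial $\PGL_n$-stabilizer. It follows that the generic fibre of $Q_{r+1,n} \to Q_{r,n}$ is the affine space underlying the central simple $K_{r,n}$-algebra $\cA$ generated by the generic matrices $A_1, \dots, A_r$; being a twisted form of a vector space it is even $\cong \bbA^{n^2}_{K_{r,n}}$, so $K_{r+1,n}$ is its function field and $\trdeg_{K_{r,n}} K_{r+1,n} = n^2$. Let $\xi \in \cA \otimes_{K_{r,n}} K_{r+1,n}$ be the generic element. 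The coefficients of $\det(x_0 I + \sum_{i=1}^{r+1} x_i A_i)$ are the coefficients of $\operatorname{Nrd}(x_0 + \sum_{i=1}^{r} x_i A_i + x_{r+1}\xi)$ regarded as a polynomial in $x_0, \dots, x_{r+1}$; those not involving $x_{r+1}$ already lie in $K_{r,n}$, so $M$ is generated over $K_{r,n}$ by the rest. Equivalently, $K_{r+1,n}/M$ is the function-field extension attached to the morphism
\[
  \Phi \colon \cA \longrightarrow \bbA^{N}_{K_{r,n}}, \qquad
  b \longmapsto \bigl( \text{coefficients involving } x_{r+1} \text{ of } \operatorname{Nrd}(x_0 + \textstyle\sum_{i=1}^{r} x_i A_i + x_{r+1} b) \bigr).
\]
It therefore suffices to prove that $\Phi$ is generically étale, i.e. that $d\Phi$ has rank $n^2$ at a general point of $\cA$: as $\cA$ is smooth of dimension $n^2 = \trdeg_{K_{r,n}} K_{r+1,n}$, this at once gives that $K_{r+1,n}/M$ is finite and that it is separable.

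\emph{Reduction to a linear-algebra statement.} Passing to a splitting field of $\cA$ and conjugating the generic matrices to honest ones $A_1, \dots, A_r$, the reduced norm becomes a determinant and
\[
  d\Phi_b(\delta b) = x_{r+1}\operatorname{tr}\!\bigl( \operatorname{adj}(x_0 I + \textstyle\sum_{i=1}^{r} x_i A_i + x_{r+1} b)\, \delta b \bigr) .
\]
Since $(X, Y) \mapsto \operatorname{tr}(XY)$ is a nondegenerate pairing on $\Mat_n$, the kernel of $d\Phi_b$ vanishes exactly when the $n^2$ cofactors of the matrix of linear forms $x_0 I + x_1 A_1 + \dots + x_r A_r + x_{r+1} b$ are linearly independent. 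Because the $r$-tuple $(A_1, \dots, A_r)$ is (the image of) a generic point, $b$ is a generic point, and the locus of tuples with linearly independent cofactors is open in $\Mat_n^{r+1}$, this reduces to producing a single $(r+1)$-tuple $(A_1, \dots, A_{r+1}) \in \Mat_n^{r+1}$ for which the $n^2$ cofactors of $x_0 I + x_1 A_1 + \dots + x_{r+1} A_{r+1}$ are linearly independent.

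\emph{The main obstacle.} This last step is the heart of the argument; everything before it is formal. When $r + 1 \geq n^2$ it is transparent: take the $A_i$ to be the matrix units $E_{ij}$ (and zero matrices if $r+1 > n^2$), so that after an invertible linear change of the $x$'s, $x_0 I + \sum x_i A_i$ becomes the fully generic $n \times n$ matrix, whose $n^2$ cofactors are $(n-1) \times (n-1)$ minors with pairwise disjoint monomial supports and hence linearly independent. The remaining range $4 \leq r+1 < n^2$ demands a less symmetric choice of matrices, and exhibiting one uniformly in $n$ — for instance out of a diagonal matrix with distinct entries together with a suitable set of off-diagonal matrix units, as one checks by hand in small cases — is the part I expect to require genuine effort. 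Combined with the case $r = 3$ of Theorem~\ref{thm.main}, proved separately via Lemma~\ref{lem.key}, the proposition then yields the theorem for all $r \geq 3$ by induction.
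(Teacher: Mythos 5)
Your formal reductions are sound. Passing to the compositum $M := K_{r,n}\cdot K_{r+1,n}'$ correctly isolates the two extensions, the identification of the generic fibre of $Q_{r+1,n}\to Q_{r,n}$ with the affine space underlying the generic central simple algebra is right (trivial generic stabilizer is exactly what is needed), and translating separability of $K_{r+1,n}/M$ into nonvanishing of $d\Phi$ — i.e.\ into linear independence of the $n^2$ entries of $\operatorname{adj}\bigl(x_0 I + x_1 A_1 + \dots + x_{r+1}A_{r+1}\bigr)$ as polynomials in $x_0,\dots,x_{r+1}$ — is the correct criterion. But you stop exactly where the content begins: for $4 \leqslant r+1 < n^2$ you do not produce the required tuple. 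That is a genuine gap, not a routine loose end. It is, however, smaller than you make it sound: the independence locus is open in $\Mat_n^{r+1}$, so by setting $A_j = 0$ for $j \geqslant 4$ you only ever need \emph{three} matrices. Moreover, the Vandermonde argument that proves Lemma~\ref{lem.differential}(b), run on a single perturbing matrix $B$, shows that $\tr\bigl(\operatorname{adj}(x_0 I + x_1 A_1 + x_2 A_2 + x_3 A_3)\,B\bigr)\equiv 0$ forces $B$ to be trace-orthogonal to every monomial of degree $\leqslant n-1$ in $A_1,A_2,A_3$, provided their span contains a matrix with distinct eigenvalues. So what you need is merely that these monomials span $\Mat_n$ — a much weaker and more generic condition than $\rank dP_{|A}=2n^2+1$ from Lemma~\ref{lem.key}. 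For the skew-commuting triple~\eqref{e.A1-3} this is quick: every $A_1^u A_2^v$ with $0\leqslant u,v\leqslant n-1$ equals, up to a power of $q$, the monomial $A_1^{u-c}A_2^{v-c}(A_1A_2)^c$ with $c=\min(u,v)$, whose degree $\max(u,v)$ is at most $n-1$. None of this is in your write-up.

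The more fundamental issue is that the paper proves this proposition with \emph{no} matrix construction at all. By Procesi's Lemma~\ref{lem.procesi}, $K_{r+1,n}$ is generated over $k$ by elements $\tr(M_i)$ and $\tr(M_i A_j)$ with $M_i$ a monomial in $A_1,A_2$ and $j\geqslant 3$; each such generator lies, after a permutation of indices, in $K_{3,n}\subset K_{r,n}$, hence is separable over $K_{r,n}'\subset K_{r+1,n}'$ by the inductive hypothesis. The induction step is thereby made entirely formal, with all explicit work deferred to the base case $r=3$ (Lemma~\ref{lem.key}). Your route is viable once the gap is closed, but it imports a concrete differential computation into the inductive step that the paper's approach shows is unnecessary.
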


The key to our proof of Proposition~\ref{prop.r++} is the following
lemma which asserts that $K_{r, n}$ is generated, as a field 
extension of $k$, by
functions that depend on at most three of the matrices $A_1, \dots, A_r$.

\begin{lemma} {\rm(}C.~Procesi{\rm)} \label{lem.procesi} 
Assume $r \geqslant 3$.  There are finitely many monomials
$M_1, \ldots, M_N$ in $A_1$ and $A_2$ such that $K_{r, n}$ is generated, 
as a field extension of $k$, by the elements
$\tr(M_i)$ and $\tr(M_i A_j)$, where $i = 1, \dots, N$, and $j = 3, \dots, r$.
\end{lemma}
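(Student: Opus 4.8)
The plan is to invoke Procesi's theorem on trace identities for matrix invariants, which describes a generating set for the invariant ring $k[\Mat_n^r]^{\PGL_n}$, and then extract from it the sparser generating set for the \emph{field} $K_{r,n}$ that the lemma asks for. Recall that by Procesi's fundamental theorem (and its companion on the second fundamental theorem), the $k$-algebra $k[\Mat_n^r]^{\PGL_n}$ is generated by the traces $\tr(A_{j_1} A_{j_2} \cdots A_{j_\ell})$ of words in $A_1, \dots, A_r$; moreover, by the Cayley--Hamilton theorem (valid since $\Char(k) = 0$ or $> n$), every word of length $> n$ can be reduced, so it suffices to take words of length at most $n$. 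This gives a finite generating set for the algebra, hence a fortiori for the field $K_{r,n}$, but the generators involve all of $A_1, \dots, A_r$, not just three at a time.

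The key reduction is the following observation: it suffices to show that each trace monomial $\tr(A_{j_1}\cdots A_{j_\ell})$ lies in the subfield $L$ of $K_{r,n}$ generated by all elements of the form $\tr(W)$ and $\tr(W A_j)$, where $W$ ranges over words in $A_1$ and $A_2$ and $j \in \{3, \dots, r\}$. Here is the mechanism. For a word $W(A_1, A_2)$ in the first two matrices and indices $j, j' \in \{3, \dots, r\}$, the polarization identity
\[
\tr(W A_j A_{j'}) + \tr(W A_{j'} A_j) = \tr\bigl(W(A_j + A_{j'})^2\bigr) - \tr(W A_j^2) - \tr(W A_{j'}^2)
\]
expresses a symmetrized two-letter insertion in terms of one-letter insertions $\tr(W B)$ and $\tr(W B^2)$ for $B \in \{A_j, A_{j'}, A_j + A_{j'}\}$. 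But $\tr(W B^2) = \tr((WB)B)$, and for a \emph{generic} point of $\Mat_n^r$ the single matrix $B$ has distinct eigenvalues, so by Newton's formulas $B^2$ — indeed any power $B^m$, $m \le n$ — is a $k$-linear combination of $I, B, \dots, B^{n-1}$ \emph{with coefficients that are polynomials in $\tr(B), \dots, \tr(B^n)$}, the latter already lying in $L$. Iterating, one shows inductively on $\ell$ and on the number of distinct ``outer'' letters from $\{A_3, \dots, A_r\}$ appearing in a word that every $\tr(A_{j_1} \cdots A_{j_\ell})$ lies in $L$: the antisymmetric part of a two-letter exchange can be handled by combining the above with traces of commutators, which again reduce to shorter or ``less mixed'' words. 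Once all trace monomials lie in $L$, we conclude $K_{r,n} = L$, and since only finitely many words $W$ in $A_1, A_2$ of length $\le n$ are needed, we may list them as $M_1, \dots, M_N$.

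The main obstacle — and the place where care is needed — is the antisymmetric (commutator) part of the letter-exchange step: the polarization identity above only controls the symmetric combination $\tr(WA_jA_{j'}) + \tr(WA_{j'}A_j)$, so one must separately argue that the difference $\tr(WA_jA_{j'}) - \tr(WA_{j'}A_j) = \tr(W[A_j, A_{j'}])$ is also expressible in terms of the allowed generators. The cleanest route is to set up the induction so that one never needs more than two of the ``outer'' matrices $A_3, \dots, A_r$ to appear simultaneously inside a single trace: a word in which at most one letter comes from $\{A_3, \dots, A_r\}$ is already literally of the form $\tr(WA_j)$ or reduces to $\tr(W)$ after using Cayley--Hamilton on $A_j$, and words with two or more such letters are reduced to these base cases by repeatedly applying the polarization identity together with the relation $\tr(XY) = \tr(YX)$ to migrate one outer letter past blocks of $A_1, A_2$'s, picking up only lower-complexity terms. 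This cyclic-invariance bookkeeping, rather than any deep new input, is the technical heart of the argument; everything else follows from Procesi's theorem and Newton's identities, both available under our hypothesis on $\Char(k)$.
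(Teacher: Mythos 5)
The paper does not give a proof of this lemma at all; it simply cites Procesi's Proposition 2.3 and Freedman--Gupta--Guralnick. So your proposal is not comparable to anything the paper actually writes out, but it is worth assessing on its own terms, and there are genuine gaps in the mechanism you propose.

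The central problem is that the polarization identity you invoke is vacuous as a reduction tool. Expanding the right-hand side,
\[
\tr\bigl(W(A_j+A_{j'})^2\bigr) - \tr(W A_j^2) - \tr(W A_{j'}^2) = \tr(W A_j A_{j'}) + \tr(W A_{j'} A_j),
\]
which is literally the left-hand side; nothing has been simplified. In particular the terms $\tr(W A_j^2)$ and $\tr\bigl(W(A_j+A_{j'})^2\bigr)$ still contain two copies of an ``outer'' matrix, and the claim that Newton's formulas reduce $B^2$ to lower powers of $B$ is false for $n \geqslant 3$: for $m < n$ the matrix $B^m$ already is one of the basis elements $I, B, \dots, B^{n-1}$, so ``expressing $B^m$ as a $k$-linear combination of $I, B, \dots, B^{n-1}$'' gives nothing. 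Cayley--Hamilton only collapses powers $B^m$ with $m \geqslant n$. Consequently your scheme has no way to pass from $\tr(W A_j^2)$ (or more generally $\tr(W A_j W' A_j)$) down to expressions of the form $\tr(M)$ and $\tr(M A_j)$ with $M$ a word in $A_1, A_2$, which is exactly the step the lemma requires. You flagged the antisymmetric (commutator) part as the obstacle, but in fact the symmetric part already does not reduce.

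The standard (and, I believe, Procesi's) argument runs differently and avoids the issue entirely: over the field $K_{2,n}$ of invariants of two generic matrices, suitable monomials $M_1, \dots, M_{n^2}$ in $A_1, A_2$ form a basis of $\Mat_n$, and the Gram matrix $\bigl(\tr(M_\ell M_i)\bigr)$ with respect to the trace form is generically invertible with entries in $K_{2,n}$. One then writes each $A_j$ ($j \geqslant 3$) as $\sum_i c_{ij} M_i$, and the coordinates $c_{ij}$ are obtained by solving the linear system $\tr(M_\ell A_j) = \sum_i c_{ij}\tr(M_\ell M_i)$; thus $c_{ij}$ is a rational function of the allowed generators $\tr(M_\ell M_i)$ and $\tr(M_\ell A_j)$. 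Substituting these expressions into an arbitrary trace monomial $\tr(A_{j_1}\cdots A_{j_\ell})$ and expanding reduces everything to traces of words in $A_1, A_2$ alone, which lie in $K_{2,n}$. This is a linear-algebra argument (expressing the third-and-beyond generic matrices in a basis built from the first two), not a polarization/induction-on-word-length argument. If you want to prove the lemma rather than cite it, this is the route to take; the route you sketched does not close.
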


\begin{proof} See~\cite[Proposition 2.3, p.~255]{procesi} 
or \cite[Theorem 3.2 and Example 3.3(a)]{fgg}.
\end{proof}

\begin{proof}[Proof of Proposition~\ref{prop.r++}]
First observe that $K_{r, n} \subset K_{r+1, n}$ and 
$K_{r, n}' \subset K_{r+1, n}'$ (just set $A_{r+1} = 0$).

By Lemma~\ref{lem.procesi}, there exist finitely many monomials
$M_1, \ldots, M_N$ in $A_1$ and $A_2$ such that $K_{r+1, n}$ is generated,
as a field extension of $k$, by $\tr(M_i)$ and $\tr(M_i A_j)$, 
where $i = 1, \dots, N$, and $j = 3, \dots, r + 1$.
It thus suffices to show that each of these elements
is algebraic and separable over $K_{r+1, n}'$. 

Let us start with $\tr(M_i)$. By definition, $\tr(M_i) \subset K_{2, n} 
\subset K_{r, n}$.  By our assumption 
$\Tr(M_i)$ is thus algebraic and separable over $K_{r, n}'$. Since 
$K_{r, n}' \subset K_{r+1, n}'$, $\Tr(M_i)$ is algebraic and separable
over $K_{r+1, n}'$, as desired.

Similarly $\tr(M_iA_3) \subset K_{3, n} \subset K_{r, n}$, since 
$r \geqslant 3$.
By our assumption $\tr(M_i A_3)$ is algebraic and separable
over $K_{r, n}'$. Hence, it is algebraic and separable over $K_{r+1, n}'$.
By symmetry $\tr(M_i A_j)$ is also algebraic and separable over 
$K_{r+1, n}'$ for every $j = 3, \dots, r+1$, and the proof of
Proposition~\ref{prop.r++} is complete.
\end{proof}

\section{The kernel of $dP$}
\label{sect.tangent-space}

Observe that the image of the map $P$ lies in the affine subspace 
$\bbA^{\binom{r + n}{n} - 1}$ of $\bbP^{\binom{r + n}{n} - 1} = 
\Hypersurf_{r, n}$ consisting of hypersurfaces of 
the form 
\[ \sum_{i_0 + \dots + i_r = n} 
a_{i_1, \ldots, i_r} x_0^{i_0} \dots x_r^{i_r} = 0 \, ,  \]
where $a_{n, 0, \ldots, 0} \neq 0$ (or equivalently,
$a_{n, 0, \ldots, 0} = 1$, after rescaling). Thus we may view
$P$ as a polynomial map between the affine spaces $\Mat_n^r$ and 
$\bbA^{\binom{r + n}{n} - 1}$. The differential $dP_{|A}$ 
at a point $A \in \Mat_n^r$ is a linear map 
$T_A(\Mat_n^r) \to T_A(\bbA^{\binom{r + n}{n} - 1})$. We will identify
$T_A(\Mat_n^r)$ with $\Mat_n^r$ and $T_A(\bbA^{\binom{r + n}{n} - 1})$ with
$\bbA^{\binom{r + n}{n} - 1}$ in the obvious way.

Given an $n \times n$ matrix $X$, we will denote the classical
adjoint of $X$ by $X^{ad}$. Recall that $X^{ad}$ is, by definition,
the $n \times n$ matrix whose $(i,j)$-component is
$(-1)^{i + j} \det(X_{ji})$, where $X_{ji}$
is the $(n-1) \times (n-1)$ matrix obtained from $X$ by deleting 
row $j$ and column $i$.  If $X$ is invertible, then 
$X^{ad} = \det(X) X^{-1}$. 

\begin{lemma} \label{lem.differential}
Let $A = (A_1, \dots, A_r)$ be an $r$-tuple of $n \times n$-matrices.

\smallskip
(a) The differential $dP_{|A}$ sends 
$(B_1, \dots, B_r) \in T_A(\Mat_n^r) \simeq \Mat_n^r$ 
to 
\[ \tr( (x_0 I + x_1 A_1 + \dots + x_r A_r)^{ad} 
(x_1 B_1 + \dots + x_r B_r) ). \]

\smallskip
(b) Suppose some matrix in the linear span of 
$A_1, \dots, A_r$ has distinct eigenvalues. Then
the kernel of $dP_{|A}$ is the space of 
$r$-tuples $(B_1, \dots, B_r) \in \Mat_n^r$ satisfying
\[ \tr( (x_1 A_1 + \dots + x_r A_r)^d 
(x_1 B_1 + \dots + x_r B_r) )= 0 \]
for every $d = 0, 1, \dots, n - 1$. 
\end{lemma}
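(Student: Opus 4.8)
Part (a) is a direct computation with the determinant. The plan is to differentiate $P_A = \det(x_0 I + x_1 A_1 + \dots + x_r A_r)$ along the curve $t \mapsto (A_1 + tB_1, \dots, A_r + tB_r)$. Writing $M(x) := x_0 I + x_1 A_1 + \dots + x_r A_r$ and $N(x) := x_1 B_1 + \dots + x_r B_r$, we have $P_{A + tB} = \det(M(x) + tN(x))$, and the standard formula $\frac{d}{dt}\big|_{t=0}\det(M + tN) = \tr(M^{ad} N)$ (valid over any commutative ring, here $k[x_0, \dots, x_r]$, since it is a polynomial identity) gives exactly the claimed expression. No genericity hypothesis is needed for (a).

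For part (b), the point is to pass from the single polynomial identity $\tr(M(x)^{ad} N(x)) = 0$ in $k[x_0, \dots, x_r]$ to the family of identities $\tr((x_1 A_1 + \dots + x_r A_r)^d \, N(x)) = 0$ in $k[x_1, \dots, x_r]$ for $d = 0, \dots, n-1$. First I would expand $M(x)^{ad}$ in powers of $x_0$. Setting $L := x_1 A_1 + \dots + x_r A_r$, so $M(x) = x_0 I + L$, the adjoint $(x_0 I + L)^{ad}$ is a polynomial in $x_0$ of degree $n-1$ whose coefficients are polynomials in $L$; indeed, from $(x_0 I + L)^{ad}(x_0 I + L) = \det(x_0 I + L) I$ and the Cayley–Hamilton theorem one gets the explicit expansion $(x_0 I + L)^{ad} = \sum_{d=0}^{n-1} x_0^{\,n-1-d}\bigl(L^d + c_1 L^{d-1} + \dots + c_d I\bigr)$, where $c_1, \dots, c_d$ are the elementary symmetric functions of the eigenvalues of $L$ (these lie in $k[x_1, \dots, x_r]$, and here is where $\Char k = 0$ or $>n$ enters, via Newton's formulas, though we only need that the $c_i$ are polynomials). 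Substituting into $\tr(M(x)^{ad} N(x)) = 0$ and collecting the coefficient of each power $x_0^{\,n-1-d}$, we obtain that $\tr\bigl((L^d + c_1 L^{d-1} + \dots + c_d I) N(x)\bigr) = 0$ identically in $k[x_1, \dots, x_r]$ for every $d = 0, \dots, n-1$. By descending induction on $d$ (starting from $d=0$, which gives $\tr N(x) = 0$), one peels off the lower-order terms and concludes $\tr(L^d N(x)) = 0$ for all $d$, which is the asserted system. Conversely, if $\tr(L^d N(x)) = 0$ for $d = 0, \dots, n-1$, the same expansion shows $\tr(M(x)^{ad} N(x)) = 0$, so $(B_1, \dots, B_r) \in \Ker(dP_{|A})$.

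I expect the genericity hypothesis — that some matrix in $\Span(A_1, \dots, A_r)$ has distinct eigenvalues — to be the subtle point, and the main obstacle is to see why it is needed at all, since the expansion above seems to work unconditionally. The resolution is that the elementary symmetric functions $c_i = c_i(L)$ are only guaranteed to be \emph{polynomial} in $x_1, \dots, x_r$, and more importantly the inductive peeling-off step implicitly treats $1, L, \dots, L^{n-1}$ as "independent" — but the correct statement is that the identity holds after substitution, which is fine. The genericity is really used to ensure the converse direction identifies the \emph{same} kernel: one needs to know that the map $(B_1,\dots,B_r) \mapsto \tr(M(x)^{ad} N(x))$ and the map to the tuple $(\tr(L^d N(x)))_{d=0}^{n-1}$ have the same kernel, and this requires that the passage $x_0\text{-expansion} \leftrightarrow$ the $d$-indexed system be reversible, which the triangular structure guarantees. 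I would double-check whether the hypothesis is instead needed to guarantee that the $n-1$ matrix coefficients of $x_0$ in $M(x)^{ad}$, namely $I, L, L^2+c_1 L, \dots$, span the same space as $I, L, \dots, L^{n-1}$ over the function field $k(x_1,\dots,x_r)$ — which holds precisely when $L$ is a \emph{regular} element, i.e. when $L$ (for generic $x$) has distinct eigenvalues, so that its minimal polynomial has degree $n$. That is exactly the stated hypothesis, and it is the one place in the argument that genuinely uses it: it ensures $1, L, \dots, L^{n-1}$ are linearly independent over $k(x_1,\dots,x_r)$, making the triangular change of basis between the two systems invertible.
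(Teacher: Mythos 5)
Your proof of part (a) is the same standard computation that the paper carries out.

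For part (b) you take a genuinely different route, and the core of it is correct. The paper reduces to a claim about a single matrix $L := x_1A_1 + \dots + x_rA_r$ over $K = k(x_1, \dots, x_r)$, passes to $\overline{K}$ and diagonalizes $L$ (this is where the distinct-eigenvalue hypothesis is invoked), computes $(x_0 I + L)^{ad}$ in the diagonal basis, and shows that both the condition $\tr\bigl((x_0 I+L)^{ad}N\bigr)\equiv 0$ and the condition $\tr(L^d N)=0$ for $d=0,\dots,n-1$ are equivalent to the vanishing of the diagonal entries of $N$, the second equivalence coming from a nonsingular Vandermonde matrix. You instead expand $(x_0 I + L)^{ad} = \sum_{d=0}^{n-1} x_0^{\,n-1-d} P_d$ purely algebraically from the adjoint identity, observe that each $P_d$ is a $k[x_1,\dots,x_r]$-linear combination of $I, L, \dots, L^d$ with unit top coefficient, and conclude that the two $n$-tuples of scalar polynomials $\bigl(\tr(P_d N)\bigr)_{d}$ and $\bigl(\tr(L^d N)\bigr)_{d}$ are related by a unit-triangular substitution and therefore vanish simultaneously. (Your displayed formula $P_d = L^d + c_1 L^{d-1} + \dots + c_d I$ has the wrong signs; the recursion $P_0 = I$, $P_d = c_d I - L P_{d-1}$ gives $P_d = \sum_{j=0}^{d}(-1)^j c_{d-j}L^j$. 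But only the unit-triangular shape is used, so this is harmless.)

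The interesting byproduct, which you half-notice and then talk yourself out of, is that your argument does not use the distinct-eigenvalue hypothesis at all. Your closing paragraph asserts that the unit-triangular change of variables requires $I, L, \dots, L^{n-1}$ to be linearly independent over $k(x_1,\dots,x_r)$; that is not so. The substitution relating $\bigl(\tr(P_d N)\bigr)_d$ to $\bigl(\tr(L^d N)\bigr)_d$ is a map between $n$-tuples of \emph{scalars} in $k[x_1,\dots,x_r]$, given by a triangular matrix with $\pm 1$ on the diagonal, and such a matrix is invertible unconditionally. Linear independence of the powers of $L$ as matrices never enters; if they are dependent, some of the equations $\tr(L^d N) = 0$ simply become redundant consequences of the earlier ones, which does not disturb the equivalence. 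Trust your first instinct: the expansion works unconditionally, so your approach in fact proves part (b) without the hypothesis. The paper carries the hypothesis because its diagonalization proof needs it, and because in the one place it is applied (Section 6) the matrix $A_1$ is diagonal with distinct entries anyway.
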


In part (b) we require that for every $d = 0, 1, \dots, n-1$ 
the left hand side of the 
formula should be identically zero as a polynomial in $x_1, \dots, x_r$.  
This gives rise to a system of linear equations
in $(B_1, \dots, B_r) \in \Mat_n^r$, whose solution space is $\Ker(dP_{|A})$.

\begin{proof} (a) 
Let $Y = (y_{ij})$ and  $\Delta Y = (\Delta y_{ij})$ be 
$n \times n$ matrices. We think of
the entries $\Delta y_{ij}$ as being ``small"
and of the entries of $Y$ as being constant.
We claim that
\begin{equation} \label{e2.3}
\det(Y + \Delta Y) = \det(Y) + \tr(Y^{ad} \Delta Y) + 
(\text{terms of degree $\geqslant 2$ in 
$\Delta y_{ij}$}). 
\end{equation}
The case where $Y = I$ is easy: the usual expansion of 
the characteristic polynomial of $\Delta Y$, yields
\begin{equation} \label{e2.2}
\det(I + \Delta Y) = 1 + \tr(\Delta Y) + (\text{terms of degree $\geqslant 2$
in $\Delta y_{ij}$}). 
\end{equation}
To prove the claim for arbitrary $Y$, note that
both sides of~\eqref{e2.3} are $n \times n$-matrices, whose entries are 
polynomials in $y_{ij}$ and $\Delta y_{ij}$. Hence, 
in order to establish~\eqref{e2.3} for an arbitrary $Y$,  
we may assume without loss of generality that $Y$ is non-singular.
In this case, 
\[ \det(Y + \Delta Y) = \det(Y) \det(I + Y^{-1} \Delta Y) \, .  \]
Expanding the second factor as in~\eqref{e2.2}, we arrive at~\eqref{e2.3}.
This completes the proof of the claim.

In order to finish the proof of part (a), we will 
compute the directional derivative of $P$
in the direction of $(B_1, \dots, B_r) \in \Mat_n^r$. 
Setting $Y:= x_0 I + x_1 A_1 + \dots + x_r A_r$ and
$\Delta Y := (x_1 B_1 + \dots + x_r B_r) h$, and 
applying~\eqref{e2.3}, we see that
\begin{align*} 
P(A_1 + hB_1, \dots A_r + hB_r) = \det(Y + \Delta Y) = 
\det(Y + \Delta Y) = \det(Y) + \tr(Y^{ad} \Delta Y) h + O(h^2) \\ 
= 
P(A_1, \dots, A_r) + \tr( (x_0 I + x_1 A_1 + \dots + x_r A_r)^{ad} 
(x_1 B_1 + \dots + x_r B_r) ) h + O(h^2) \, . 
\end{align*} 
This shows that the directional derivative
of $P$ at $A$ in the direction of $B$ 
is \[ \tr( (x_0 I + x_1 A_1 + \dots + x_r A_r)^{ad} 
(x_1 B_1 + \dots + x_r B_r) ), \] and part (a) follows. 
(Note that in the last computation
$h \to 0$ but $x_0, x_1, \dots, x_n$ remain constant throughout.) 

\smallskip
(b) Let $A$ be an $n \times n$ matrix 
with distinct eigenvalues, over a field $K$.
We claim that $B \in \Mat_n$ satisfies 

\smallskip
(i) $\tr((x_0 I + A)^{ad} B) = 0$ for every
$x_0$ 

\smallskip
\noindent
if and only if $B$ satisfies 

\smallskip
(ii) $\tr(A^d B ) = 0$ 
for every $d = 0, \dots, n-1$.

\smallskip
Once this claim is established, we can deduce part (b) from part (a)
by setting $A := x_1 A_1 + \dots + x_r A_r$ and 
$B := x_1 B_1 + \dots + x_r B_r$ and working over the field 
$K = k(x_1, \dots, x_r)$.

To prove the claim, we may pass to the algebraic closure of $K$.
By our assumption $A$ has distinct eigenvalues, and hence, 
is diagonalizable.
We may thus assume without loss of generality that 
$A$ is the diagonal matrix $\diag(\lambda_1, \dots, \lambda_n)$,
where $\lambda_1, \dots, \lambda_n$ are distinct elements of $K$.
Then 
\[ (tI + A)^{ad} = \diag(\dfrac{\Pi(t)}{t + \lambda_1}, \dots, 
\dfrac{\Pi(t)}{t + \lambda_n}), \]
where $\Pi(t) = (t + \lambda_1) (t + \lambda_2) \dots (t + \lambda_n) = 
\det(tI + A)$ and each diagonal entry 
$\dfrac{\Pi(t)}{t + \lambda_i}$ is a polynomial of degree $n-1$ in $t$.
Condition (i) now translates to
\[ \sum_{i = 1}^n b_{ii} \dfrac{\Pi(t)}{t + \lambda_i} = 0 , \]
where $b_{11}, \dots, b_{nn}$ are the diagonal entries of $B$.
Setting $t = - \lambda_i$, for $i = 1, \dots, n$,
we obtain $b_{11} = b_{22} = \dots = b_{nn} = 0$.
On the other hand, condition (ii) translates to
\[ \sum_{i = 1}^n \lambda_i^d b_{ii} = 0 , \]
for each $d = 0, 1, \dots, n-1$, which we view as a homogeneous
system of $n$ linear equations in $n$ unknowns $b_{11}, \dots, b_{nn}$. 
The matrix of this system is the Vandermonde matrix
\[ \begin{pmatrix} 1 & 1 & \dots & 1 \\
\lambda_1 & \lambda_2 & \dots & \lambda_n \\
\vdots & \vdots & \vdots & \vdots  \\
\lambda_1^{n-1} & \lambda_2^{n-1} & \dots & \lambda_n^{n-1} 
\end{pmatrix} . \]
Since $\lambda_1, \dots, \lambda_n$ are distinct, this 
Vandermonde matrix
is non-singular, and the above system has only the trivial solution,
$b_{11} = b_{22} = \dots = b_{nn} = 0$. 

In summary, for 
$A = \diag(\lambda_1, \dots, \lambda_n)$ both (i) and (ii) are equivalent
to $b_{11} = b_{22} = \dots = b_{nn} = 0$. Hence, (i) and (ii) are equivalent
to each other. This completes the proof of the claim and thus of 
Lemma~\ref{lem.differential}(b).
\end{proof}

\section{Skew-commuting matrices and $q$-binomial coefficients}
\label{sect.skew}

Recall that we are working over a base field $k$
of characteristic $0$ or $> n$. For the sake of proving
Theorem~\ref{thm.main}, we may assume without loss of generality that
$k$ is algebraically closed.  In particular, we may assume that
$k$ contains a primitive $n$th root of unity, which we will denote 
by $q$. We will also assume that $r = 3$; see Proposition~\ref{prop.r+}(a).  
For the remainder of the proof of Theorem~\ref{thm.main}, we will set
\begin{equation} \label{e.A1-3}
A_1 := \begin{pmatrix} 1 & 0 & 0 & \dots & 0 \\
                          0 & q & 0 & \dots & 0 \\
                          \hdotsfor{5} \\
                          0 & 0 & 0 & \dots & q^{n-1}
\end{pmatrix},
\; \; 
A_2 := \begin{pmatrix} 0 & 1 & 0 & \dots & 0 \\
 0 & 0 & 1 & \dots & 0 \\
                          \hdotsfor{5} \\
                          1 & 0 & 0 & \dots & 1
\end{pmatrix},
\; \;  \text{and} \; \; A_3 := A_1 A_2.
\end{equation}
It is easy to see that 
\[ A_2 A_1 = q A_1 A_2 \, , \quad \text{and} \quad A_1^n = A_2^n = I \, , \]
where, as usual, $I$ denotes that $n \times n$-identity matrix.
Hence, conjugation by $A_1$ commutes with conjugation by $A_2$; we will denote
these commuting linear operators by $\Conj_{A_1}$ and $\Conj_{A_2} \colon
\Mat_n \to \Mat_n$, respectively. They generate a subgroup of
$\GL(\Mat_n)$ isomorphic to $(\bbZ/n \bbZ)^2$.  
One readily checks that
\[ \text{$\Conj_{A_1} ( A_1^{e_1} A_2^{e_2}) = q^{-e_2} A_1^{e_1} A_2^{e_2}$
and 
$\Conj_{A_2} ( A_1^{e_1} A_2^{e_2}) = q^{e_1} A_1^{e_1} A_2^{e_2}$.} \]
In particular,
\begin{equation} \label{e.trace}
\tr ( A_1^{e_1} A_2^{e_2}) = \begin{cases} 
\text{$n$, if $e_1 \equiv e_2 \equiv 0 \pmod{n}$, and} \\
\text{$0$, otherwise}.
\end{cases}
\end{equation}
Letting $e_1$ and $e_2$ range over $\bbZ/n \bbZ$, we see that
each of the $n^2$ one-dimensional subspaces
$\Span_k(A_1^{e_1} A_2^{e_2})$ is a character space for 
the abelian group
\[ \langle \Conj_{A_1} , \Conj_{A_2} \rangle \simeq
 (\bbZ/n \bbZ)^2 \, . \] 
Since these spaces have distinct associated characters,
the matrices $A_1^{e_1} A_2^{e_2}$ form a $k$-basis
of $\Mat_n$, as $e_1$ and $e_2$ range over $\bbZ/n \bbZ$.
In the sequel it will often be more convenient for us to
work in this basis than in
the standard basis of $\Mat_n$, consisting of elementary matrices.

We now recall that the $q$-{\em factorial} $[d]_q!$ of 
an integer $d \geqslant 0$ is given by
\[ [d]_q ! := [1]_q [2]_q \dots [d]_q \, , \]
where $[a]_q : = \dfrac{1 - q^a}{1 - q} = 1 + q + \dots + q^{a-1}$.
In particular, $[0]_q ! = 1$. (Recall that we are assuming that 
$n \geqslant 2$ throughout, and thus $q \neq 1$.)
If $a$ and $b$ are non-negative integers and $a + b = d \leqslant n-1$, then 
\begin{equation} \label{e.q-binomial}
\binom{d}{a, b}_q : = \frac{[d]_q !}{[a]_q ! [b]_q !} \, . 
\end{equation}
is called a $q$-{\em binomial coefficient}.  If $a < 0$ or $b < 0$, we set
\[ \binom{d}{a, b}_q : = 0 \, . 
\]
Similarly, if $a + b + c = d \leqslant n-1$, then
\begin{equation} \label{e.q-trinomial}
\binom{d}{a, b, c}_q : = \begin{cases}
\text{{$\dfrac{[d]_q !}{[a]_q ! \,  [b]_q ! \,  [c]_q !}$}, 
if $a, b, c \geqslant 0$, and} \\
 \\
\text{$0$, otherwise.}
\end{cases} 
\end{equation}
is called a $q$-{\em trinomial coefficient}. This terminology is justified by
parts (a) and (b) of the following lemma. Part (c) will play an important role
in the sequel.

\begin{lemma} \label{lem.binomial} Assume $d = 0, \dots, n-1$. 

\smallskip
(a) Let $X$ and $Y$ be matrices such that $XY = q YX$. Then
\[ (X + Y)^d = \sum_{a + b = d} 
\binom{d}{a, b}_q X^a Y^b  \, . \]

\smallskip
(b) Let $A_1$ and $A_2$ be as in~\eqref{e.A1-3}. Then 
\[ (x_1 A_1 + x_2 A_2 + x_3 A_1 A_2)^d = 
\sum_{ a + b + c = d}
q^{\frac{c(c-1)}{2}} 
\binom{d}{a, b, c}_q 
x_1^a x_2^b x_3^c 
A_1^{a + c} A_2^{b + c}. \] 

\smallskip
(c) For any $e_1, e_2 \in \bbZ/n \bbZ$, 
\[ \tr((x_1 A_1 + x_2 A_2 + x_3 A_1 A_2)^d A_1^{e_1} A_2^{e_2}) =
n \sum_{a, b, c}
q^{e_1(b+c) + \frac{c(c-1)}{2}} 
\binom{d}{a, b, c}_q 
x_1^a x_2^b x_3^c , \]
where the sum ranges over triples of non-negative integers $(a, b, c)$, subject
to the following conditions:
$a + b + c = d$, 
$a + c +  e_1 \equiv 0 \pmod{n}$, and 
$b + c + e_2 \equiv 0 \pmod{n}$.
\end{lemma}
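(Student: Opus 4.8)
The plan is to prove all three parts by direct computation, using the skew-commutation relation $A_2 A_1 = q A_1 A_2$ and the combinatorial identity satisfied by $q$-binomial coefficients. The three parts build on one another: (a) is a $q$-analogue of the binomial theorem, (b) specializes (a) to the concrete matrices of~\eqref{e.A1-3}, and (c) extracts the relevant trace from (b) using~\eqref{e.trace}.

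For part (a), I would induct on $d$. The base cases $d = 0, 1$ are trivial. For the inductive step, write $(X+Y)^{d} = (X+Y)(X+Y)^{d-1}$ and substitute the inductive formula for $(X+Y)^{d-1}$. The key manipulation is that to bring all the $X$'s to the left of all the $Y$'s in a term like $Y \cdot X^{a} Y^{b}$, one uses $Y X^{a} = q^{a} X^{a} Y$, which follows from $YX = qXY$ by an easy induction on $a$. Collecting the coefficient of $X^{a} Y^{b}$ in $(X+Y)^{d}$ then yields exactly the Pascal-type recursion $\binom{d}{a,b}_q = \binom{d-1}{a-1,b}_q + q^{a}\binom{d-1}{a,b-1}_q$, which is the standard recursion defining the $q$-binomial coefficients (and consistent with the convention that these vanish when an index is negative, since $a + b = d \leqslant n-1$ keeps us away from the vanishing of $[m]_q$ for $1 \leqslant m \leqslant n-1$). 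One should note explicitly that $q \neq 1$ since $n \geqslant 2$, so the denominators $[a]_q!\,[b]_q!$ are nonzero.

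For part (b), set $X = x_1 A_1$ and $Y = x_2 A_2 + x_3 A_1 A_2 = (x_2 I + x_3 A_1) A_2$; since $A_2 A_1 = q A_1 A_2$ one checks $X Y = q Y X$, so part (a) applies and gives $(x_1 A_1 + Y)^{d} = \sum_{a+e=d}\binom{d}{a,e}_q x_1^{a} A_1^{a} Y^{e}$. It then remains to expand $Y^{e} = \big((x_2 I + x_3 A_1)A_2\big)^{e}$. Here I would again use the skew-commutation to move all the $A_2$'s to the right: $(MA_2)^{e} = M \cdot (A_2 M A_2^{-1}) \cdot (A_2^2 M A_2^{-2}) \cdots A_2^{e}$, where $M = x_2 I + x_3 A_1$ and $A_2 A_1 A_2^{-1} = q^{-1} A_1$ (equivalently use $\Conj_{A_2}$). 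Because $A_1$ and $I$ commute, the product of these conjugates of $M$ factors as $\prod_{j=0}^{e-1}(x_2 I + q^{-j} x_3 A_1)$, and expanding this product using the $q$-binomial theorem once more — now for the commuting-up-to-scalars pair $(x_2 I, x_3 A_1)$, which is just the ordinary $q$-binomial theorem in a single variable — produces the factor $q^{c(c-1)/2}\binom{e}{b,c}_q x_2^{b} x_3^{c} A_1^{c}$ for the term with $A_1^{c}$ (the exponent $c(c-1)/2$ coming from $\sum$ of the shifts $-j$ over the chosen positions, after a standard reindexing). Multiplying by $x_1^{a} A_1^{a}$ from the front and $A_2^{e}$ from the back, using $e = b+c$, and combining $\binom{d}{a,e}_q\binom{e}{b,c}_q = \binom{d}{a,b,c}_q$ (the transitivity of $q$-multinomials), gives the stated formula.

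For part (c), multiply the formula from (b) on the right by $A_1^{e_1} A_2^{e_2}$ and take the trace. A general term is $q^{\bullet}\binom{d}{a,b,c}_q x_1^{a} x_2^{b} x_3^{c} A_1^{a+c} A_2^{b+c} A_1^{e_1} A_2^{e_2}$; using $A_2 A_1 = q A_1 A_2$ to re-sort into the form (scalar)$\cdot A_1^{\ast} A_2^{\ast}$ introduces a power of $q$ with exponent $e_1(b+c)$ (from moving $A_1^{e_1}$ past $A_2^{b+c}$) and leaves $A_1^{a+c+e_1} A_2^{b+c+e_2}$. By~\eqref{e.trace} the trace of this is $n$ when $a+c+e_1 \equiv 0$ and $b+c+e_2 \equiv 0 \pmod n$, and $0$ otherwise, which is precisely the stated summation condition; the surviving $q$-exponent is $e_1(b+c) + c(c-1)/2$, as claimed. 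The main obstacle throughout is purely bookkeeping: getting the exponent $c(c-1)/2$ right, which requires care with the order of the conjugation shifts and a clean reindexing in the single-variable $q$-binomial expansion of $\prod_{j=0}^{e-1}(x_2 I + q^{-j} x_3 A_1)$; everything else is a routine application of part (a) and the relation $A_2 A_1 = q A_1 A_2$.
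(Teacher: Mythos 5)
Your overall plan is sound, and your argument for part (b) takes a genuinely different route from the paper's, but it contains a sign error that would derail the computation if carried through literally. For part (a), proving the identity directly by induction on $d$ via the $q$-Pascal recursion $\binom{d}{a,b}_q = \binom{d-1}{a-1,b}_q + q^a\binom{d-1}{a,b-1}_q$ is a perfectly good substitute for the paper's citation of Sch\"utzenberger. Note, though, that the hypothesis should read $YX = qXY$, not $XY = qYX$ as printed in the paper (a typo you inherit when you write ``one checks $XY = qYX$'' in part (b)); for $d=2$ the printed convention would give coefficient $1+q^{-1}$ on $XY$, not $1+q = \binom{2}{1,1}_q$. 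Your actual induction, using $YX^a = q^a X^aY$, is consistent with the correct convention $YX = qXY$, and that is also what your $X = x_1A_1$, $Y = (x_2I+x_3A_1)A_2$ satisfy.

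For part (b), the paper applies part (a) twice---first to $X = x_1A_1 + x_3A_1A_2$, $Y = x_2A_2$, then to $X=x_1A_1$, $Y=x_3A_1A_2$, together with $(A_1A_2)^c = q^{c(c-1)/2}A_1^cA_2^c$---whereas you apply part (a) once and then expand $Y^e = \bigl((x_2I + x_3A_1)A_2\bigr)^e$ by a conjugation trick plus the Gauss--Cauchy product identity $\prod_{j=0}^{e-1}(1+q^jt) = \sum_c q^{c(c-1)/2}\binom{e}{c}_q t^c$. This is a valid alternative, but it invokes a separate well-known identity (a product expansion, not the power expansion of part (a)), and you should say so explicitly or prove it. More importantly, $A_2A_1 = qA_1A_2$ gives $A_2A_1A_2^{-1} = qA_1$, \emph{not} $q^{-1}A_1$, so the product you obtain is $\prod_{j=0}^{e-1}(x_2I + q^{j}x_3A_1)$, with $+j$ in the exponent. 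With your $q^{-j}$, the coefficient of the $A_1^c$ term in the product is $q^{c(c-1)/2 - c(e-1)}\binom{e}{c}_q$, not $q^{c(c-1)/2}\binom{e}{c}_q$, and no reindexing rescues the sign. With the correct exponent $+j$, the Gauss--Cauchy identity gives exactly the factor $q^{c(c-1)/2}\binom{e}{b,c}_q x_2^bx_3^cA_1^c$, and combining with the outer $\binom{d}{a,e}_q$ via transitivity of $q$-multinomials yields the stated formula. Your part (c) coincides with the paper's argument. Fix the conjugation sign and make the product identity explicit, and the proof is complete.
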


\begin{proof} The binomial formula in part (a) was proved by
M.~P.~Sch\"utzenberger~\cite{schutzenberger}; for a 
detailed discussion of this formula and further references, 
see~\cite{monthly}.

(b) We apply part (a) twice. First we set 
$X = x_1 A_1 + x_3 A_1 A_2$ and $Y := x_2 A_2$ to obtain  
\begin{equation} \label{e4.1} (x_1 A_1 + x_2 A_2 + x_3 A_1 A_2)^d = 
\sum_{i + j = d}
\binom{d}{i, j}_q
(x_1 A_1 + x_3 A_1 A_2)^i x_2^{j} A_2^{j}.
\end{equation}
Next we apply part (a) with $X := x_1A_1$ and $Y := x_3 A_1 A_2$:
\begin{equation} \label{e4.2} (x_1 A_1 + x_3 A_1 A_2)^i = 
\sum_{ a + c = i} 
\binom{i}{a, c}_q 
x_1^a x_3^{c} A_1^a (A_1 A_2)^c . 
\end{equation} 
Substituting~\eqref{e4.2} into \eqref{e4.1}, setting $i := a + c$ and $b:= j$, 
and using the identities
\begin{equation} \label{e.binomial-formula}
\binom{d}{a, b, c}_q =  \binom{d}{i, b}_q 
\binom{i}{a, c}_q 
\end{equation}
and  
\begin{equation} \label{e.skew2}
(A_1 A_2)^c = q^{\frac{c(c-1)}{2}} A_1^c A_2^c \, , 
\end{equation}
we obtain the formula in part (b). Note that~\eqref{e.binomial-formula}
is an immediate consequence of 
the definitions~\eqref{e.q-binomial} and~\eqref{e.q-trinomial}, 
and~\eqref{e.skew2} follows from $A_2A_1 = q A_1 A_2$.

To deduce part (c) from part (b), multiply both sides of (b) 
by $A_1^{e_1} A_2^{e_2}$, rewrite 
$A_2^{b + c} A_1^{e_1}$ as $q^{e_1(b + c)} A_1^{e_1} A_2^{b + c}$,
and take the trace on both sides. The desired equality now follows 
from~\eqref{e.trace}.
\end{proof}

For future reference we record a simple identity involving $q$-trinomial 
coefficients.

\begin{lemma} \label{lem.ratios}
Suppose $\alpha$, $\beta$, and $\gamma$ are integers,
$0 \leqslant \alpha, \beta, \gamma \leqslant n-1$ and 
$1 \leqslant \alpha + \beta + \gamma \leqslant n$. 
Set $d := \alpha + \beta + \gamma - 1$. Then  
\[ (\binom{d}{\alpha-1, \beta, \gamma}_q : 
 \binom{d}{\alpha, \beta -1 , \gamma}_q : 
 \binom{d}{\alpha, \beta, \gamma -1}_q ) =
(1 - q^{\alpha}: 1 - q^{\beta}: 1 - q^{\gamma}) \] 
as points in the projective plane $\bbP^2$.
\end{lemma}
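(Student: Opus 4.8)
The plan is to read everything off the definition~\eqref{e.q-trinomial} of the $q$-trinomial coefficient, the point being that decrementing one of the three lower arguments of $\binom{d}{a,b,c}_q$ by $1$ has the effect of multiplying it by the corresponding $q$-integer $[a]_q$, $[b]_q$, or $[c]_q$.

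First I would note that every $q$-factorial occurring here is a nonzero element of $k$: since $q$ is a primitive $n$th root of unity, $q^j \neq 1$ for $1 \leqslant j \leqslant n-1$, hence $[j]_q = \tfrac{1-q^j}{1-q} \neq 0$ for such $j$, and therefore $[\alpha]_q!$, $[\beta]_q!$, $[\gamma]_q!$ and $[d]_q!$ are all nonzero (using $d = \alpha+\beta+\gamma-1 \leqslant n-1$). Consequently the scalar
\[ c \;:=\; \frac{[d]_q!}{[\alpha]_q!\,[\beta]_q!\,[\gamma]_q!} \]
is a well-defined nonzero element of $k$ (it is not literally a $q$-trinomial coefficient, since $\alpha+\beta+\gamma = d+1$, but that is irrelevant). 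I would then verify the three identities
\[ \binom{d}{\alpha-1,\beta,\gamma}_q = c\,[\alpha]_q, \qquad \binom{d}{\alpha,\beta-1,\gamma}_q = c\,[\beta]_q, \qquad \binom{d}{\alpha,\beta,\gamma-1}_q = c\,[\gamma]_q. \]
Take the first. If $\alpha \geqslant 1$, then $(\alpha-1)+\beta+\gamma = d$ with all arguments nonnegative, so~\eqref{e.q-trinomial} gives $\binom{d}{\alpha-1,\beta,\gamma}_q = \tfrac{[d]_q!}{[\alpha-1]_q!\,[\beta]_q!\,[\gamma]_q!} = c\cdot\tfrac{[\alpha]_q!}{[\alpha-1]_q!} = c\,[\alpha]_q$; if $\alpha = 0$, the left-hand side is $0$ by the vanishing convention in~\eqref{e.q-trinomial}, and the right-hand side is $c\,[0]_q = 0$ as well. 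The other two identities follow by the same computation with the roles of $\alpha,\beta,\gamma$ permuted.

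Finally, substituting $[m]_q = \tfrac{1-q^m}{1-q}$ into these identities shows that the left-hand triple of the lemma equals $\tfrac{c}{1-q}\,(1-q^\alpha,\,1-q^\beta,\,1-q^\gamma)$, and since $\tfrac{c}{1-q}$ is a nonzero scalar this represents the same point of $\bbP^2$ as $(1-q^\alpha : 1-q^\beta : 1-q^\gamma)$, provided that triple is not the zero vector. But $1 \leqslant \alpha+\beta+\gamma$ together with $\alpha,\beta,\gamma \leqslant n-1$ forces at least one of $\alpha,\beta,\gamma$ to lie in $\{1,\dots,n-1\}$, and then $q$ raised to that exponent is $\neq 1$, so the corresponding coordinate is nonzero; this simultaneously confirms that the left-hand triple is nonzero, so that both sides genuinely define points of $\bbP^2$. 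The only place any care is needed is the bookkeeping when one or more of $\alpha,\beta,\gamma$ vanishes, which is absorbed uniformly by the conventions that a $q$-trinomial coefficient with a negative argument is $0$ and that $[0]_q = 0$; there is no genuine obstacle.
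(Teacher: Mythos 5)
Your proof is correct and uses essentially the same approach as the paper: both clear the $q$-factorials by multiplying the $q$-trinomial triple by a common nonzero scalar (the paper uses $(1-q)\,[\alpha]_q!\,[\beta]_q!\,[\gamma]_q!/[d]_q!$, the reciprocal of your $c/(1-q)$). You are a bit more systematic than the paper in handling the boundary cases where some of $\alpha,\beta,\gamma$ vanish, absorbing them uniformly via $[0]_q=0$ rather than treating them as a separate case, and you also explicitly verify that the resulting vector is nonzero so that both sides genuinely define points of $\bbP^2$, a point the paper leaves implicit.
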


\begin{proof} If $\alpha, \beta, \gamma > 0$, the lemma is obtained 
by multiplying each of the numbers
\[ \binom{d}{\alpha-1, \beta, \gamma}_q, \quad   
\binom{d}{\alpha, \beta -1, \gamma}_q , \quad \text{and} \quad 
 \binom{d}{\alpha, \beta, \gamma-1}_q 
\]
by the non-zero scalar
$(1- q) \dfrac{[\alpha]_q ! \, [\beta]_q! \, [\gamma]_q!}{[d]_q !} \in k$.
If one of the integers $\alpha, \beta, \gamma$ is $0$, say, $\alpha = 0$,
then 
\[ \binom{d}{\alpha-1, \beta, \gamma}_q = 1 - q^{\alpha} = 0 \, , \]
and the lemma follows.   
\end{proof}

\section{A grading of $\Ker(dP_{|A})$}
\label{sect.grading}

Let $A_1$, $A_2$ and $A_3 = A_1 A_2$ be as in~\eqref{e.A1-3}.
Let $V := \Ker(dP_{|A}) \subset \Mat_{n}^3$, where the map 
$P \colon \Mat_n^3 \to \Hypersurf_{3, n}$ is defined in
the Introduction.  Since $A_1$ has distinct eigenvalues, 
Lemma~\ref{lem.differential}(b) tells us that $V \subset \Mat_n^3$ 
consists of triples $(B_1, B_2, B_3)$ satisfying
\[ \tr((x_1 A_1 + x_2 A_2 + x_3 A_1 A_2)^d (x_1 B_1 + x_2 B_2 + x_3 B_3)) = 0 \]
for $d = 0, 1, \dots, n-1$. Here the left hand side is required to be 
zero as a polynomial in $x_1, x_2, x_3$, for every  
$d = 0, 1, \dots, n-1$. 

Following the strategy outlined in Section~\ref{sect.strategy},
in order to complete the proof of Theorem~\ref{thm.main} 
(or equivalently, of Lemma~\ref{lem.key}), it suffices 
to show that $\dim(V) = n^2 - 1$.

\begin{lemma} \label{lem.invariance} 
$V$ is invariant under the linear action of the finite 
abelian group
$(\bbZ/n \bbZ)^2 = \langle \tau, \sigma \rangle $ on $\Mat_n^3$ given 
by 
\begin{gather} \label{e.grading}
\sigma \colon (B_1, B_2, B_3) \mapsto (\Conj_{A_1}(B_1), 
q \Conj_{A_1}(B_2), q \Conj_{A_1}(B_3)) \\
\tau \colon (B_1, B_2, B_3) \mapsto (q^{-1} \Conj_{A_2}(B_1), 
 \Conj_{A_2}(B_2), q^{-1} \Conj_{A_2} (B_3) )  \, .
\end{gather}
\end{lemma}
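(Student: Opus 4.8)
The plan is to show directly that the defining equations of $V = \Ker(dP_{|A})$ are preserved under $\sigma$ and $\tau$. Recall from the discussion just before the statement that $V \subset \Mat_n^3$ consists of those triples $(B_1, B_2, B_3)$ for which
\[ F_d(B_1, B_2, B_3) := \tr\bigl((x_1 A_1 + x_2 A_2 + x_3 A_1 A_2)^d (x_1 B_1 + x_2 B_2 + x_3 B_3)\bigr) = 0 \]
identically in $x_1, x_2, x_3$, for each $d = 0, 1, \dots, n-1$. So it suffices to check that if $(B_1', B_2', B_3') = \sigma(B_1, B_2, B_3)$ (resp. $\tau(\dots)$), then $F_d(B_1', B_2', B_3')$ is, up to a nonzero scalar independent of the $B_i$, equal to $F_d(B_1, B_2, B_3)$ with $x_1, x_2, x_3$ rescaled by some fixed monomials in $q$. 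Since rescaling the variables $x_i$ does not affect whether a polynomial in $x_1, x_2, x_3$ vanishes identically, this gives the invariance.

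First I would treat $\sigma$. Write $C := \Conj_{A_1}$, so $C$ is conjugation by $A_1$, an algebra automorphism of $\Mat_n$, and in particular $C$ commutes with $\tr$ and with taking powers, and $\tr(C(M)) = \tr(M)$ for all $M$. By definition $\sigma$ sends $(B_1, B_2, B_3)$ to $(C(B_1), qC(B_2), qC(B_3))$, hence
\[ x_1 B_1' + x_2 B_2' + x_3 B_3' = C\bigl(x_1 B_1 + q x_2 B_2 + q x_3 B_3\bigr). \]
On the other side, since $C$ is an automorphism and $C(A_1) = A_1$, $C(A_2) = q^{-1} A_2$ (from $\Conj_{A_1}(A_1^{e_1}A_2^{e_2}) = q^{-e_2} A_1^{e_1}A_2^{e_2}$), and $C(A_1 A_2) = q^{-1} A_1 A_2$, we get
\[ C\bigl(x_1 A_1 + q x_2 A_2 + q x_3 A_1 A_2\bigr) = x_1 A_1 + x_2 A_2 + x_3 A_1 A_2, \]
so that $(x_1 A_1 + x_2 A_2 + x_3 A_1 A_2)^d = C\bigl((x_1 A_1 + q x_2 A_2 + q x_3 A_1 A_2)^d\bigr)$. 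Therefore
\[ F_d(B_1', B_2', B_3') = \tr\Bigl(C\bigl((x_1 A_1 + q x_2 A_2 + q x_3 A_1 A_2)^d (x_1 B_1 + q x_2 B_2 + q x_3 B_3)\bigr)\Bigr) = F_d\bigl(B_1, B_2, B_3\bigr)\big|_{x_1, qx_2, qx_3}, \]
which vanishes identically in $x_1, x_2, x_3$ precisely when $F_d(B_1, B_2, B_3)$ does. Hence $\sigma$ preserves $V$. The computation for $\tau$ is entirely parallel: using $D := \Conj_{A_2}$ with $D(A_1) = q A_1$, $D(A_2) = A_2$, $D(A_1 A_2) = q A_1 A_2$, one finds $x_1 B_1' + x_2 B_2' + x_3 B_3' = D(q^{-1} x_1 B_1 + x_2 B_2 + q^{-1} x_3 B_3)$ and that $D$ fixes $q^{-1} x_1 A_1 + x_2 A_2 + q^{-1} x_3 A_1 A_2$ scaled appropriately, giving $F_d(\tau(B)) = F_d(B)|_{q^{-1}x_1, x_2, q^{-1}x_3}$, again vanishing iff $F_d(B)$ does.

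Finally, $\sigma$ and $\tau$ each have order dividing $n$ (since $\Conj_{A_1}$, $\Conj_{A_2}$ do, as $A_1^n = A_2^n = I$, and the scalar factors $q, q^{-1}$ are $n$th roots of unity) and they commute (since $\Conj_{A_1}$ and $\Conj_{A_2}$ commute and scalars are central), so $\langle \sigma, \tau\rangle$ is a quotient of $(\ZZ/n\ZZ)^2$; one checks it is the full $(\ZZ/n\ZZ)^2$ by looking at the induced characters on, say, $\Span_k(A_1) \oplus 0 \oplus 0$, but in any case being invariant under $\langle \sigma, \tau \rangle$ is all we need. I do not expect a genuine obstacle here: the only thing to be careful about is bookkeeping of the powers of $q$ in the identities $\Conj_{A_1}(A_1^{e_1}A_2^{e_2}) = q^{-e_2}A_1^{e_1}A_2^{e_2}$ and its companion, and making sure the scalar prefactors multiplying $A_1$, $A_2$, $A_1A_2$ in each linear combination match the ones used to rescale the $B_i$, so that the automorphism $\Conj_{A_i}$ can be pulled outside the trace. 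The definition of $\sigma$, $\tau$ in \eqref{e.grading} is manifestly engineered to make exactly this matching work.
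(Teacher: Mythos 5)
Your proof is correct and uses the same idea as the paper: apply the trace-preserving algebra automorphism $\Conj_{A_1}$ (resp.\ $\Conj_{A_2}$) to pull the defining polynomial $F_d(\sigma B)$ back to $F_d(B)$ with the variables $x_1,x_2,x_3$ rescaled by fixed powers of $q$, and observe that such a rescaling does not affect identical vanishing. The only cosmetic difference is the direction of the computation (you work from $F_d(\sigma B)$ backward to $F_d(B)$, while the paper starts from $f_{B,d}=0$ and applies $\Conj_{A_1}$ forward), and your closing remark verifying $\langle\sigma,\tau\rangle\cong(\ZZ/n\ZZ)^2$ is a harmless extra the paper leaves implicit.
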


\begin{proof} Suppose $(B_1, B_2, B_3) \in V$, i.e., 
\[ f_{B_1, B_2, B_3, d}(x_1, x_2, x_3) := 
\tr((x_1 A_1 + x_2 A_2 + x_3 A_1 A_2)^d
(x_1 B_1 + x_2 B_2 + x_3 B_3)) = 0 \]
for every $d = 0, \dots, n-1$. Here $f_{B_1, B_2, B_3, d}$ 
is a polynomial in $x_1, x_2, x_3$ with coefficients in $k$,
and $f_{B_1, B_2, B_3, d}(x_1, x_2, x_3) = 0$ means 
that $f_{B_1, B_2, B_3, d}$ is the zero polynomial, i.e., 
every coefficient vanishes.  Let 
\[ (C_1, C_2, C_3) := \sigma(B_1, B_2, B_3) =
(\Conj_{A_1}( B_1)  , q \Conj_{A_1} (B_2), q \Conj_{A_1}(B_3)) \, ,\]
as above.  To prove that $V$ is invariant
under $\sigma$, we need to show that $(C_1, C_2, C_3) \in V$, i.e., 
$f_{C_1, C_2, C_3, d}$
is identically $0$ for every $d= 0, 1, \dots, n-1$.
Keeping in mind that 
\[ A_1 := \Conj{A_1} (A_1), \quad 
A_2 := q \Conj_{A_1} (A_2), \quad \text{and} \quad 
A_1 A_2 := q \Conj_{A_1}(A_1 A_2), \]
we see that
\begin{gather}
\nonumber
0 = f_{B_1, B_2, B_3, d}(x_1, x_2, x_3)  
=   \tr( \Conj_{A_1} ((x_1 A_1 + x_2 A_2 + x_3 A_1 A_2)^d
(x_1 B_1 + x_2 B_2 + x_3 B_3)) \\
\nonumber
 = \tr((x_1 A_1 + x_2 q^{-1} A_2 + x_3 q^{-1} A_1 A_2)^d
  (x_1 C_1 +  x_2 q^{-1} C_2 + 
x_3 q^{-1} C_3)) \\ 
\nonumber
= f_{C_1, C_2, C_3, d}(x_1, q^{-1} x_2, q^{-1} x_3).  
\end{gather}
This shows that 
$f_{C_1, C_2, C_3, d}(x_1, q^{-1} x_2, q^{-1} x_3)$ is identically zero
as a polynomial in $x_1, x_2, x_3$.  Hence, so is   
$f_{C_1, C_2, C_3, d}(x_1, x_2, x_3)$, as desired.

A similar argument shows that $V$ is invariant under $\tau$. (Here 
we conjugate by $A_2$, rather than $A_1$.)
This completes the proof of Lemma~\ref{lem.invariance}.
\end{proof}
  
Since we are working over an algebraically closed base field $k$ 
and $\Char(k) = 0$ or $> n$,
Lemma~\ref{lem.invariance} tells us that $V$ is a direct sum of 
character spaces for the action of 
$(\bbZ/ n \bbZ)^2$ on $\Mat_n^3$.
There are $n^2$ character spaces, each of dimension $3$
(one for each character of $(\bbZ/ n \bbZ)^2$). They are defined as follows
\[ W_{e_1, e_2} := \{ (t_1 A_1^{e_1 + 1} A_2^{e_2}, 
t_2 A_1^{e_1} A_2^{e_2 + 1}, t_3 A_1^{e_1 + 1} A_2^{e_2 + 1}) \, | \,
t_1, t_2, t_3 \in k \} , \]
where $(e_1, e_2) \in (\bbZ/n \bbZ)^2$. Here $\sigma$ multiplies
every vector in $W_{e_1, e_2}$ by $q^{-e_2}$ and $\tau$ by $q^{e_1}$.
In other words, $(\bbZ/n \bbZ)^2$ acts on $W_{e_1, e_2}$
by the character
\[ \chi \colon \sigma^a \tau^b \mapsto q^{-e_2 a + e_1 b} \, . \]
In summary, 
$V = \bigoplus_{e_1, e_2  = 0}^{n-1} V_{e_1, e_2}$, 
where \[ V_{e_1, e_2} := V \cap W_{e_1, e_2}. \] 
Recall that our goal is to show that $\dim(V) = n^2 - 1$. 
Thus in order to prove Theorem~\ref{thm.main}, it suffices 
to establish
the following proposition. 

\begin{proposition} \label{prop.final}

\smallskip
{\rm (a)} $V_{0, 0} = (0)$.

\smallskip
{\rm (b)} $\dim(V_{e_1, e_2}) = 1$ for any $(0, 0) \neq (e_1, e_2) \in
(\bbZ/ n \bbZ)^2$.  
\end{proposition}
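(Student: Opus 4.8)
The plan is to fix $(e_1,e_2)\in(\ZZ/n\ZZ)^2$ and to describe $V_{e_1,e_2}=V\cap W_{e_1,e_2}$ explicitly as the solution set of a system of linear equations in the three parameters $t_1,t_2,t_3$ that parametrize $W_{e_1,e_2}$. First I would take $(B_1,B_2,B_3)=\bigl(t_1A_1^{e_1+1}A_2^{e_2},\,t_2A_1^{e_1}A_2^{e_2+1},\,t_3A_1^{e_1+1}A_2^{e_2+1}\bigr)$, substitute $x_1B_1+x_2B_2+x_3B_3$ into $\tr\bigl((x_1A_1+x_2A_2+x_3A_1A_2)^d(x_1B_1+x_2B_2+x_3B_3)\bigr)$, expand $(x_1A_1+x_2A_2+x_3A_1A_2)^d$ by Lemma~\ref{lem.binomial}(b), commute the remaining powers of $A_1$ and $A_2$ past each other via $A_2^mA_1^\ell=q^{\ell m}A_1^\ell A_2^m$ (a consequence of $A_2A_1=qA_1A_2$), and take traces using~\eqref{e.trace}. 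The result is a polynomial in $x_1,x_2,x_3$, homogeneous of degree $d+1$, whose monomial $x_1^\alpha x_2^\beta x_3^\gamma$ (with $\alpha+\beta+\gamma=d+1$) has vanishing coefficient unless $\alpha+\gamma\equiv-e_1$ and $\beta+\gamma\equiv-e_2\pmod n$, and for such a monomial the coefficient equals $n$ times a $k$-linear form
\[ t_1\,q^{u_1}\binom{d}{\alpha-1,\beta,\gamma}_q+t_2\,q^{u_2}\binom{d}{\alpha,\beta-1,\gamma}_q+t_3\,q^{u_3}\binom{d}{\alpha,\beta,\gamma-1}_q, \]
where $u_1,u_2,u_3$ are explicit integers assembled from $\gamma(\gamma-1)/2$ and the $q$-commutation exponents. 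Letting $d$ run over $0,\dots,n-1$, membership of $(B_1,B_2,B_3)$ in $V$ is exactly the simultaneous vanishing of all these linear forms.

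Next I would simplify each such form. When $0\leqslant\alpha,\beta,\gamma\leqslant n-1$ — which is automatic once $(e_1,e_2)\neq(0,0)$, since an exponent equal to $n$ forces the other two to be $0$ and hence $e_1\equiv e_2\equiv0$ — Lemma~\ref{lem.ratios}, applied with $d=\alpha+\beta+\gamma-1$, identifies the ratio of the three $q$-trinomial coefficients above with $(1-q^\alpha:1-q^\beta:1-q^\gamma)$. Using the two congruences to write $q^\alpha=q^{-e_1-\gamma}$ and $q^\beta=q^{-e_2-\gamma}$, collecting powers of $q$, and dividing by a nonzero scalar, the vanishing of each such form collapses to the single equation
\[ (q^{e_1}-q^{-\gamma})\,t_1+(q^{e_2}-q^{-\gamma})\,t_2+(q^{-\gamma}-1)\,t_3=0. \]
Equivalently, with $P:=q^{e_1}t_1+q^{e_2}t_2-t_3$ and $Q:=t_1+t_2-t_3$, this reads $P-q^{-\gamma}Q=0$. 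Hence $(B_1,B_2,B_3)\in V_{e_1,e_2}$ if and only if $P-q^{-\gamma}Q=0$ for every \emph{admissible} $\gamma\in\{0,\dots,n-1\}$, i.e.\ every $\gamma$ for which the triple $(\alpha,\beta,\gamma)$ forced by the two congruences satisfies $1\leqslant\alpha+\beta+\gamma\leqslant n$ — together with, only in the exceptional case $(e_1,e_2)=(0,0)$, three further equations coming from the monomials $x_1^n,x_2^n,x_3^n$.

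Both assertions then follow. For (a): when $(e_1,e_2)=(0,0)$ a short check shows that no triple with all exponents $\leqslant n-1$ satisfies $1\leqslant\alpha+\beta+\gamma\leqslant n$, so the only constraints are those from $x_1^n,x_2^n,x_3^n$; computing those three coefficients gives, up to nonzero scalars, $t_1=0$, $t_2=0$, $t_3=0$, i.e.\ $V_{0,0}=(0)$. For (b): when $(e_1,e_2)\neq(0,0)$ I would show there are at least two admissible values of $\gamma$, and then subtracting the equations $P-q^{-\gamma}Q=0$ for two distinct such values forces $Q=0$ and hence $P=0$. The forms $P$ and $Q$ are linearly independent exactly when $(q^{e_1},q^{e_2},-1)$ is not a scalar multiple of $(1,1,-1)$, i.e.\ exactly when $(e_1,e_2)\neq(0,0)$ (here using that $q$ is a primitive $n$-th root of unity). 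So the solution set is $\{P=Q=0\}$, a one-dimensional subspace, giving $\dim V_{e_1,e_2}=1$. The combinatorial step is elementary: writing $E_i:=(-e_i)\bmod n\in\{0,\dots,n-1\}$, the triple attached to $\gamma$ has $\alpha+\beta+\gamma=E_1+E_2-\gamma+n\,\nu(\gamma)$, where $\nu(\gamma)$ is the number of $i\in\{1,2\}$ with $\gamma>E_i$, and splitting into the ranges $\gamma\leqslant\min(E_1,E_2)$, $\min(E_1,E_2)<\gamma\leqslant\max(E_1,E_2)$, and $\gamma>\max(E_1,E_2)$, a direct count shows that $1\leqslant\alpha+\beta+\gamma\leqslant n$ has at least two solutions when $(E_1,E_2)\neq(0,0)$ and none when $(E_1,E_2)=(0,0)$.

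The main obstacle is the bookkeeping in reducing the monomial coefficients to the clean equation $P-q^{-\gamma}Q=0$: one must keep track of all powers of $q$ (the factor $q^{c(c-1)/2}$ from Lemma~\ref{lem.binomial}(b) together with the $q$-commutation factors $q^{e_i(b+c)}$), respect the convention that a $q$-trinomial coefficient is $0$ when one of its entries is negative — so that the boundary cases $\alpha=0$, $\beta=0$, or $\gamma=0$ stay consistent with the clean equation — and carefully segregate the exceptional monomials $x_1^n,x_2^n,x_3^n$, which enter only when $(e_1,e_2)=(0,0)$ and are precisely what forces $V_{0,0}=(0)$. Everything else, including the count of admissible $\gamma$, is routine.
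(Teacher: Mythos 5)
Your proposal is correct and follows essentially the same route as the paper: derive the constraint equation for each monomial $x_1^{\alpha}x_2^{\beta}x_3^{\gamma}$ from Lemma~\ref{lem.binomial}(c), observe that when $(e_1,e_2)\neq(0,0)$ the exponents stay below $n$ so Lemma~\ref{lem.ratios} applies, rewrite the constraint so it depends only on $\gamma\bmod n$, and then produce two admissible $\gamma$'s giving a rank-$2$ system, while handling $(e_1,e_2)=(0,0)$ via the three exceptional monomials $x_1^n,x_2^n,x_3^n$. Your packaging of the constraint as $P-q^{-\gamma}Q=0$ with $P=q^{e_1}t_1+q^{e_2}t_2-t_3$, $Q=t_1+t_2-t_3$ is a nice tidying that gives $V_{e_1,e_2}=\{P=Q=0\}$ directly and so dispenses with the appeal to Remark~\ref{rem.commutator} for the lower bound $\dim V_{e_1,e_2}\geqslant 1$; likewise your uniform count of admissible $\gamma$'s replaces the paper's two explicit case-by-case triples, but both are minor variants of the same argument.
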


Proposition~\ref{prop.final} will be proved in the next section.

\begin{remark} \label{rem.commutator} If $X$ and $Y$ are 
$n \times n$-matrices, then clearly
$\Tr(X^d[X, Y]) = 0$ for every $d \geqslant 0$.
Setting $X = x_1 A_1 + x_2 A_2 + x_3 A_1 A_2$, $Y = A_1^{e_1}A_2^{e_2}$,
and thus
\[ [X, Y] = x_1 (1 - q^{e_2}) A_1^{e_1 + 1} A_2 + x_2 (q^{e_1} - 1) 
A_1^{e_1} A_2^{e_2 + 1} + x_3 (q^{e_1} - q^{e_2}) A_1^{e_1 + 1} A_2^{e_2 + 1} 
\, , \]
we see that the triple 
\[ (B_1, B_2, B_3) = ((1 - q^{e_2}) A_1^{e_1 + 1} A_2^{e_2}, (q^{e_1} - 1) 
A_1^{e_1} A_2^{e_2 + 1}, (q^{e_1} - q^{e_2}) A_1^{e_1 + 1} A_2^{e_2 + 1}) 
\]
lies in $V_{e_1, e_2}$. Here $(B_1, B_2, B_3) = (0, 0, 0)$
if $(e_1, e_2) = (0, 0)$ in $(\bbZ/n \bbZ)^2$ and
$(B_1, B_2, B_3) \neq (0, 0, 0)$ otherwise.
Proposition~\ref{prop.final} tells us that, in fact,
$(B_1, B_2, B_3)$ spans $V_{e_1, e_2}$ for every $(e_1, e_2) \in 
(\bbZ/n \bbZ)^2$.
\end{remark}

\section{Conclusion of the proof of Theorem~\ref{thm.main}}
\label{sect.final}

It remains to prove Proposition~\ref{prop.final}.
Given $t_1, t_2, t_3 \in k$, recall that an element 
\[ w := (t_1 A_1^{e_1 + 1} A_2^{e_2}, 
t_2 A_1^{e_1} A_2^{e_2 + 1}, t_3 A_1^{e_1 + 1} A_2^{e_2 + 1}) \] 
of $W_{e_1, e_2}$ lies in $V_{e_1, e_2}$ if and only if
\[ \tr((x_1 A_1 + x_2 A_2 + x_3 A_1 A_2)^d(t_1 x_1 A_1^{e_1+1} A_2^{e_2} +
t_2 x_2 A_1^{e_1} A_2^{e_2 + 1} + t_3 x_3 A_1^{e_1+1} A_2^{e_2+ 1})) \]  
is identically $0$
as a polynomial in $x_1, x_2, x_3$, for every $d = 0, \dots, n-1$.
Rewriting this polynomial as 
\begin{gather} \nonumber
t_1 x_1 \tr((x_1 A_1 + x_2 A_2 + x_3 A_1 A_2)^d 
A_1^{e_1+1} A_2^{e_2})  \\ 
\nonumber
+ t_2 x_2 \tr( (x_1 A_1 + x_2 A_2 +  x_3 A_1 A_2)^d A_1^{e_1} A_2^{e_2 + 1}) \\ 
\nonumber
+ t_3 x_3 \tr( (x_1 A_1 + x_2 A_2 +  x_3 A_1 A_2)^d A_1^{e_1 + 1} 
A_2^{e_2 + 1}) 
\end{gather}
and applying Lemma~\ref{lem.binomial}(c) to each term, we obtain
\begin{gather} \nonumber t_1 \sum_{(a, b, c)}
n q^{(e_1+1)(b+c) + \frac{c(c-1)}{2}} 
\binom{d}{a, b, c}_q x_1^{a + 1} x_2^b x_3^c \\ 
\label{e.expansion}
+ t_2 \sum_{(a', b', c')}
n q^{e_1(b'+c') + \frac{c'(c'-1)}{2}} 
\binom{d}{a', b', c'}_q 
x_1^{a' + 1} x_2^{b'} x_3^{c'} \\ 
\nonumber
+ t_3 \sum_{(a'', b'', c'')} 
n q^{(e_1+ 1)(b''+c'') + \frac{c''(c''-1)}{2}} 
\binom{d}{a'', b'', c''} x_1^{a''} x_2^{b''} x_3^{c'' + 1} = 0,
\end{gather}
where the sums are takes over triples of non-negative integers
$(a, b, c)$, $(a', b', c')$ and $(a'', b'', c'')$ satisfying
\[  \begin{array}{l} 
 a + b + c = d \\
 a + c +  e_1 + 1 \equiv 0 \! \! \! \! \! \pmod{n} \\
 b + c + e_2 \equiv 0 \! \! \! \! \! \pmod{n},
 \end{array}  
\begin{array}{l} 
a' + b' + c' = d \\
a' + c' +  e_1 \equiv 0 \! \! \! \! \! \pmod{n} \\
b' + c' + e_2 + 1 \equiv 0 \! \! \! \! \! \pmod{n},
\end{array} 
\begin{array}{l} 
a'' + b'' + c'' = d \\
a'' + c'' +  e_1 + 1 \equiv 0 \! \! \! \! \! \pmod{n} \\
b'' + c'' + e_2 + 1 \equiv 0 \! \! \! \! \! \pmod{n}.
\end{array}  \] 
The expression on the left hand side of~\eqref{e.expansion} is a 
homogeneous polynomial in $x_1, x_2, x_3$ of degree $d + 1$. 
Our element 
$w = (t_1 A_1^{e+1} A_2^{e_2}, t_2 A_1^{e_1} A_2^{e_2 + 1}, 
t_3 A_1^{e_1 + 1} A_2^{e_2 + 1})$ of $W_{e_1, e_2}$ lies 
in $V_{e_1, e_2}$ if and only if this polynomial is identically zero.

To make the conditions the vanishing of this polynomial 
imposes on $t_1, t_2, t_3$ more explicit, let us
examine the coefficient of $x_1^{\alpha} x_2^{\beta} x_3^{\gamma}$ 
(with $d + 1 = \alpha + \beta + \gamma$).
This coefficient is zero unless
$\alpha$, $\beta$ and $\gamma$ are chosen so that 
\begin{equation} \label{e.congruences}
\begin{array}{l}
\alpha + \beta + \gamma \leqslant n \\ 
\alpha + \gamma +  e_1 \equiv 0 \pmod{n} \\
\beta + \gamma + e_2 \equiv 0 \pmod{n}. 
\end{array}
\end{equation}
On the other hand, if $\alpha$, $\beta$ and $\gamma$ satisfy
conditions~\eqref{e.congruences}, then setting
\begin{gather}
\nonumber
d := \alpha + \beta + \gamma -1 \\
\nonumber
a = \alpha - 1, \quad b = \beta, \quad c = \gamma \\
\nonumber
a' = \alpha , \quad b' = \beta -1 , \quad c = \gamma \\
\nonumber
a'' = \alpha, \quad b'' = \beta, \quad c'' = \gamma -1,
\end{gather}
we see that the coefficient of
$x_1^{\alpha} x_2^{\beta} x_3^{\gamma}$ is
\begin{gather}  
\nonumber
t_1 nq^{(e_1+1)(\beta + \gamma) + \frac{\gamma(\gamma-1)}{2}} 
\binom{d}{\alpha-1, \beta, \gamma}_q   
+ t_2 nq^{e_1(\beta - 1 + \gamma) + \frac{\gamma(\gamma -1)}{2}} 
\binom{d}{\alpha, \beta -1, \gamma}_q \\
\nonumber
+ t_3 nq^{(e_1 + 1)(\beta + \gamma - 1) + \frac{(\gamma -2)(\gamma - 1)}{2}} 
\binom{d}{\alpha, \beta, \gamma -1}_q \, . 
\end{gather}
Equating this coefficient to $0$ and
dividing through by $n q^{e_1(\beta + \gamma) + \frac{\gamma(\gamma -1)}{2}}$, 
we obtain
\begin{equation} \label{e.constraint1}
t_1 q^{\beta + \gamma} 
\binom{d}{\alpha-1, \beta, \gamma}_q +   
t_2 q^{-e_1} 
\binom{d}{\alpha, \beta -1, \gamma}_q \\
+ t_3 q^{\beta - e_1}
\binom{d}{\alpha, \beta, \gamma - 1}_q \\
= 0 
\end{equation}
In summary, $w = (t_1 A_1^{e+1} A_2^{e_2}, t_2 A_1^{e_1} A_2^{e_2 + 1}, 
t_3 A_1^{e_1 + 1} A_2^{e_2 + 1})$ lies in $V_{e_1, e_2}$ 
if and only if~\eqref{e.constraint1} holds 
for every $\alpha, \beta, \gamma$ satisfying 
conditions~\eqref{e.congruences}.  

\begin{proof}[Proof of Proposition~\ref{prop.final}(a)]
Our goal is to show that $w = (t_1 A_1, t_2 A_2, t_3 A_1 A_2)$ lies
in $V_{0, 0}$ if and only if 
$t_1 = t_2 = t_3 = 0$.  Note that here $e_1 = e_2 = 0$, 
and $(\alpha, \beta, \gamma) = (n, 0, 0)$, 
$(0, n, 0)$, $(0, 0, n)$ satisfy conditions~\eqref{e.congruences}. 
Substituting $(\alpha, \beta, \gamma) = (n, 0, 0)$
into~\eqref{e.constraint1}, and remembering that
{\Large $\binom{d}{a, b, c}_q$}$ = 0$ whenever $a$, $b$ 
or $c$ is $< 0$, we obtain
\[ t_1 \binom{n - 1}{n-1, 0, 0}_q = 0, \] or equivalently, $t_1 = 0$.  
Similarly, setting $(\alpha, \beta, \gamma) = (0, n, 0)$ yields
$t_2 = 0$, and setting $(\alpha, \beta, \gamma) = (0, 0, n)$ yields
$t_3 = 0$. This proves part (a).
\end{proof}

\begin{proof}[Proof of Proposition~\ref{prop.final}(b)]
Here $(e_1, e_2) \neq (0, 0)$, and we can
use Lemma~\ref{lem.ratios} to simplify formula~\eqref{e.constraint1}
as follows
\[  t_1 q^{\beta + \gamma}(1-q^{\alpha}) + 
 t_2 q^{-e_1} (1 - q^{\beta}) 
 + t_3 q^{\beta - e_1}(1 - q^{\gamma}) = 0 \, . \] 
Using~\eqref{e.congruences}, we can rewrite this in a more symmetric way,
as
\begin{equation} \label{e.constraint2}
t_1 (q^{-e_2} - q^{d+1}) + t_2 (q^{-e_1} - q^{d+1}) 
+ t_3 (q^{d+1} - q^{-e_1 - e_2}) = 0 \, , 
\end{equation}
where $d + 1 = \alpha + \beta + \gamma$, as before.

\smallskip
\begin{claim} Suppose $e_1, e_2 = 0, \dots, n-1$ and $(e_1, e_2) \neq (0, 0)$.
Then there exist triples of non-negative integers, 
$(\alpha_1, \beta_1, \gamma_1)$ and $(\alpha_2, \beta_2, \gamma_2)$ 
satisfying conditions~\eqref{e.congruences}
such that $d_1 \not \equiv d_2 \pmod{n}$. 
Here $d_1 = \alpha_1 + \beta_1 + 
\gamma_1 - 1$ and $d_2 = \alpha_2 + \beta_2 + \gamma_2 - 1$. 
\end{claim}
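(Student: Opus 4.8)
The plan is to reduce the Claim to an elementary statement about non-negative integer solutions of a pair of linear congruences, and then to exhibit such solutions explicitly. The first thing I would record is that if $(\alpha,\beta,\gamma)$ satisfies~\eqref{e.congruences} and $(e_1,e_2)\neq(0,0)$, then $(\alpha,\beta,\gamma)=(0,0,0)$ is impossible (it would force $e_1\equiv e_2\equiv 0$), so $1\leqslant\alpha+\beta+\gamma\leqslant n$ and hence $d=\alpha+\beta+\gamma-1$ lies in $\{0,1,\dots,n-1\}$. Consequently $d_1\not\equiv d_2\pmod n$ is \emph{equivalent} to the plain inequality $\alpha_1+\beta_1+\gamma_1\neq\alpha_2+\beta_2+\gamma_2$, and the Claim becomes: produce two triples satisfying~\eqref{e.congruences} whose coordinate sums differ.

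Next I would fix notation: let $u,v\in\{0,1,\dots,n-1\}$ be the residues of $-e_1$ and $-e_2$ modulo $n$, so $u=n-e_1$ when $e_1\neq 0$ and $u=0$ when $e_1=0$, and similarly for $v$. Since $(e_1,e_2)\neq(0,0)$, at least one of $u,v$ is positive, so $m:=\max(u,v)$ satisfies $1\leqslant m\leqslant n-1$. The first explicit triple $T_1$ is obtained by setting $\gamma:=\min(u,v)$ and letting the remaining two coordinates be forced by $\alpha+\gamma=u$, $\beta+\gamma=v$ (thus one of $\alpha,\beta$ is $0$ and the other is $|u-v|$); a one-line computation shows $T_1$ satisfies~\eqref{e.congruences} and has coordinate sum exactly $m$.

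For the second triple I would split into two cases. If $e_1+e_2\leqslant n$, take $T_2:=(e_2,\,e_1,\,n-e_1-e_2)$: its entries are non-negative, its sum is $n$, and the congruences hold since $\alpha+\gamma=n-e_1\equiv-e_1$ and $\beta+\gamma=n-e_2\equiv-e_2$; as $T_1$ has sum $m\leqslant n-1<n$, the two sums differ. If $e_1+e_2>n$, then necessarily $e_1,e_2\geqslant 1$ (otherwise $e_1+e_2\leqslant n-1$), so $u=n-e_1\geqslant 1$ and $v=n-e_2\geqslant 1$; take $T_2:=(u,\,v,\,0)$, which satisfies~\eqref{e.congruences} and has sum $u+v=2n-e_1-e_2$, a value strictly between $0$ and $n$. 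Since $\min(u,v)\geqslant 1$ we get $u+v=m+\min(u,v)>m$, so again the sums differ. This exhausts all cases.

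I do not expect a genuine obstacle — it is a short finite case-check — but the point requiring care is the interaction of the size constraint $\alpha+\beta+\gamma\leqslant n$ in~\eqref{e.congruences} with the congruences: no single uniform choice works, since the ``$\gamma=0$'' triple $(u,v,0)$ is admissible only when $u+v\leqslant n$, while the ``sum $=n$'' triple $(e_2,e_1,n-e_1-e_2)$ is admissible only when $e_1+e_2\leqslant n$, and these ranges do not cover everything on their own. The case split on $e_1+e_2$ versus $n$ is exactly what guarantees a legitimate second triple in every situation, and I would organize the write-up around it, verifying~\eqref{e.congruences} and the relevant sum (in)equality explicitly in each of the two cases.
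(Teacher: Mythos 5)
Your proof is correct, and it is a legitimate, somewhat different route to the same elementary fact. You make explicit the observation (left implicit in the paper) that because any admissible $(\alpha,\beta,\gamma)$ has $1\leqslant\alpha+\beta+\gamma\leqslant n$, the condition $d_1\not\equiv d_2\pmod n$ is simply $d_1\neq d_2$; that is a useful clarification. Where you diverge from the paper is in the case split: the paper first symmetrizes (assuming $e_2\leqslant e_1$ without loss of generality) and then splits on $e_2=0$ versus $e_2\geqslant 1$, taking in the first case two triples with sums $n-e_2$ and $n-e_2+1$ and in the second case the pair $(0,e_1,n-e_1)$ and $(n-e_1,0,0)$. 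You instead avoid the symmetry reduction and split on $e_1+e_2\leqslant n$ versus $e_1+e_2>n$. Your $T_1$ (the one with $\gamma=\min(u,v)$) actually coincides with one of the paper's triples after its symmetry reduction, but your second triple --- $(e_2,e_1,n-e_1-e_2)$ of sum exactly $n$, or $(u,v,0)$ with $\gamma=0$ --- is genuinely different from the paper's choices. Both arguments amount to exhibiting two admissible lattice points of distinct coordinate sums; the paper's version is slightly more compact because the WLOG halves the case analysis, while yours has the small advantage of never invoking symmetry and of isolating in advance the reduction from ``distinct mod $n$'' to ``distinct.'' I verified all of your congruence and non-negativity checks, including the boundary subcases ($e_1=0$ with $e_2\neq 0$, and $e_1+e_2=n$), and found no gaps.
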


\smallskip
We will now deduce Proposition~\ref{prop.final}(b) from this claim.
The proof of the claim will be deferred to the end of this section.
Assuming the claim is established, formula~\eqref{e.constraint2} tells us that
if $(t_1 A_1^{e_1+1} A_2^{e_2}, t_2 A_1^{e_1} A_2^{e_2 + 1}, t_3 A_1^{e_1 + 1}
A_2^{e_2 + 1})$ lies in $V_{e_1, e_2}$, then 
$t_1, t_2$ and $t_3$ satisfy the linear equations
\begin{gather}
\nonumber
\text{$ t_1 (q^{-e_2} - q^{d_1+1}) + t_2 (q^{-e_1} - q^{d_1+1}) 
+ t_3 (q^{d_1+1} - q^{-e_1 - e_2}) = 0$,} \\
\label{e.last-system}
\text{$ t_1 (q^{-e_2} - q^{d_2+1}) + t_2 (q^{-e_1} - q^{d_2+1}) 
+ t_3 (q^{d_2+1} - q^{-e_1 - e_2}) = 0$.} 
\end{gather}
The matrix of this system
\[ \begin{pmatrix}
q^{e_2} - q^{d_1+1} & q^{-e_1} - q^{d_1+1} & q^{d_1+1} - q^{-e_1 - e_2} \\
q^{e_2} - q^{d_2+1} & q^{-e_1} - q^{d_2+1} & q^{d_2+1} - q^{-e_1 - e_2} 
\end{pmatrix} \]
is easily seen to have rank $2$. Indeed, the determinants of the
$2 \times 2$ minors are 
\begin{gather} 
\nonumber
\pm (q^{d_1+1} - q^{d_2+1})(q^{-e_2} - q^{-e_1}), \\
\nonumber
\pm (q^{d_1+1} - q^{d_2+1})(q^{-e_1-e_2} - q^{-e_1}), \quad \text{and}\\
\nonumber
\pm (q^{d_1+1} - q^{d_2+1})(q^{-e_1 - e_2} - q^{-e_1}) \, .  
\end{gather}
Since $q^{d_1 +1} \neq q^{d_2 + 1}$, 
all three of these determinants can only be zero if 
$q^{-e_1} = q^{-e_2} = q^{-e_1 - e_2}$ or equivalently, 
$e_1 \equiv e _2 \equiv e_1 + e_2
\pmod{n}$, i.e., $(e_1, e_2) = (0, 0) \pmod{n}$,
contradicting our assumption that $(e_1, e_2) \neq (0, 0)$. 
We conclude that the solution space to
system~\eqref{e.last-system} is of dimension $ \leqslant 1$
and consequently, $\dim(V_{e_1, e_2}) \leqslant 1$
On the other hand, by Remark~\ref{rem.commutator}, 
$\dim(V_{e_1, e_2}) \geqslant 1$.
This shows that $\dim(V_{e_1, e_2}) = 1$, 
thus completing the proof of Proposition~\ref{prop.final}(b).

We now turn to the proof of the claim. The statement of the claim 
is clearly symmetric with respect to $e_1$ and $e_2$. That is, 
if the triples \[ \text{$(\alpha_1, \beta_1, \gamma_1)$ and
$(\alpha_2, \beta_2, \gamma_2)$} \] satisfy the claim 
for $(e_1, e_2)$, then the triples $(\beta_1, \alpha_1, \gamma_1)$,
$(\beta_2, \alpha_2, \gamma_2)$ will satisfy the claim for
$(e_2, e_1)$. Thus for the purpose of proving this claim, we may 
assume without loss of generality that 
$0 \leqslant e_2 \leqslant e_1 \leqslant n-1$.

\smallskip
{\bf Case 1:} $e_2 \geqslant 1$. Here the triples 
\[ \text{$(\alpha_1, \beta_1, \gamma_1) = (0, e_1 - e_2, n - e_1)$ 
and
$(\alpha, \beta, \gamma) = (1, e_1 - e_2 + 1, n - e_1 - 1)$} \] 
satisfy conditions~\eqref{e.congruences} and yield distinct sums
$d_1 + 1 = \alpha_1 + \beta_1 + \gamma_1 = n - e_2$ and
$d_2 + 1 = \alpha_2 + \beta_2 + \gamma_2 = n - e_2 + 1$.
Note that $d_2 + 1 \leqslant n$, because we are assuming 
that $e_2 \geqslant 1$.

\smallskip
{\bf Case 2:} $e_2 = 0$ but $1 \leqslant e_1 \leqslant n-1$. Set 
$(\alpha_1, \beta_1, \gamma_1) = (0, e_1, n - e_1)$, as in
Case 1,  and $(\alpha_2, \beta_2, \gamma_2) = (n - e_1, 0, 0)$.
Then $d_1 + 1 = n$ and $d_2 + 1 = n - e_1$ are, once again, distinct
modulo $n$.
This completes the proof of the claim and hence, 
of Proposition~\ref{prop.final} and of Theorem~\ref{thm.main}.
\end{proof}

\section{The case where $r \geqslant n^2 -1$}
\label{sect.frobenius}
  
Let $K_{r, n} := k(\Mat_n^r)^{\PGL_n}$ is the field of matrix invariants
and $K_{r, n}'$ is the subfield generated by the coefficients of the
generalized characteristic polynomial
\[ (A_1, \dots, A_r) \quad \mapsto \quad 
\det(x_0 I + x_1 A_1 + \dots + x_r A_r) \, , \]
as in Section~\ref{sect.r=3}. Recall that $K_{r, n}$ is
the field of rational functions on $\Mat_n^r /\! \!/\PGL_n$
and $K_{r, n}'$ is the field of rational functions on $\DHyp_{r, n}$.

By abuse of notation we will denote by $t$ the transposition map
$\Mat_n \to \Mat_n$ as well as the maps it induces on
$\Mat_n^r$ (by applying $t$ to each component),
$\Mat_n^r /\!\!/\PGL_n$, and their function fields.
For example, 
\[ t \left( \tr (A_1A_2 A_3) \right) := 
 \tr(A_1^t A_2^t A_3^t) = \tr(A_3 A_2 A_1) \, . 
\] 
Since $ \det(x_0 I + x_1 A_1 + \dots + x_r A_r) =
\det(x_0 I + x_1 A_1^t + \dots + x_r A_r^t)$, we have 
\begin{equation} \label{e.Krn} 
K_{r, n}' \subset K_{r, n}^t \, .
\end{equation}
Our standing assumption that the base field $k$ is algebraically closed
of characteristic $0$ or $> n$ remains in force.

\begin{lemma} \label{lem.conditions}
Assume $r \geqslant 2$, $n \geqslant 2$ and $(r, n) \neq (2, 2)$.
Then the following assertions are equivalent.

\smallskip
(a) The general fiber of 
$\overline{P} \colon \Mat_n^r /\! \!/\PGL_n \to \DHyp_{r, n}$
consists of exactly two points corresponding to the conjugacy 
classes of $(A_1, \dots, A_r)$ and $(A_1^t, \dots, A_r^t)$.

\smallskip
(b) $[K_{r, n} : K_{r, n}'] = 2$.

\smallskip
(c) $K_{r, n}' = K_{r, n}^t$.
\end{lemma}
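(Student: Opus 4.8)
The plan is to prove the cycle of implications (a) $\Rightarrow$ (c) $\Rightarrow$ (b) $\Rightarrow$ (a), using throughout the dictionary between dominant morphisms of varieties and inclusions of function fields, together with the hypothesis $(r,n)\neq(2,2)$, which by Remark~(6) of the Introduction guarantees that $(A_1,\dots,A_r)$ and $(A_1^t,\dots,A_r^t)$ are \emph{not} conjugate for a general $r$-tuple, i.e.\ that the transposition involution $t$ acts nontrivially on $K_{r,n}$, and that $K_{r,n}'\subsetneq K_{r,n}$, so $\overline{P}$ is genuinely $\geqslant 2$-to-$1$.

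First I would establish (a) $\Rightarrow$ (c). Assume the general fiber of $\overline{P}$ consists of exactly the two points $\pi(A)$ and $\pi(A^t)$. Geometrically this says that the function field extension $K_{r,n}/K_{r,n}'$ has degree $2$ and that the nontrivial element of a Galois closure is realized by the involution $t$: more precisely, a rational function $f\in K_{r,n}$ lies in $K_{r,n}'$ (i.e.\ is pulled back from $\DHyp_{r,n}$) if and only if it takes the same value at both points of a general fiber, i.e.\ if and only if $f=t(f)$. The set of such $f$ is exactly the fixed field $K_{r,n}^t$; hence $K_{r,n}'=K_{r,n}^t$. To make this rigorous I would pass to an open dense $U\subseteq \DHyp_{r,n}$ over which $\overline{P}$ is finite étale of degree equal to the generic fiber cardinality, observe that~\eqref{e.Krn} already gives $K_{r,n}'\subseteq K_{r,n}^t$, and then use that the fiber of $\overline{P}$ over a general point is a single $t$-orbit of size $2$ to conclude $[K_{r,n}:K_{r,n}']=2$; combined with $[K_{r,n}:K_{r,n}^t]=2$ (since $t$ has order $2$ and acts nontrivially) and the inclusion $K_{r,n}'\subseteq K_{r,n}^t$, equality of the two subfields follows.

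Next, (c) $\Rightarrow$ (b) is immediate: if $K_{r,n}'=K_{r,n}^t$ then $[K_{r,n}:K_{r,n}']=[K_{r,n}:K_{r,n}^t]=2$, the last equality because $t\in\operatorname{Aut}(K_{r,n}/k)$ has order $2$ and is nontrivial by the hypothesis $(r,n)\neq(2,2)$. Finally, for (b) $\Rightarrow$ (a): assume $[K_{r,n}:K_{r,n}']=2$. Then a general fiber of $\overline{P}$ has exactly $2$ points (using that $\overline{P}$ is generically finite onto $\DHyp_{r,n}$, which holds for $r\geqslant 3$ by Theorem~\ref{thm.main} and for $r=2$, $n\geqslant 3$ needs the cited results, or one argues purely field-theoretically that the number of points in a general fiber equals the separable degree, which here is $2$). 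Since $\overline{P}\circ t=\overline{P}$ by~\eqref{e.Krn} and $t$ has no fixed points on a general fiber (again $(r,n)\neq(2,2)$), the two points of a general fiber must be $\pi(A)$ and $\pi(A^t)$; this is exactly~(a).

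The main obstacle I anticipate is the careful passage between the scheme-theoretic fiber of $\overline{P}$ and the field-theoretic degree $[K_{r,n}:K_{r,n}']$ in the implications involving~(a): one must know that $\overline{P}$ is \emph{separable} (so that ``number of points in a general fiber'' $=[K_{r,n}:K_{r,n}']$), and that the generic fiber, a single Galois-type orbit under $\langle t\rangle$, genuinely has size $2$ rather than being a nonreduced length-$2$ scheme. For $r\geqslant 3$ separability is Theorem~\ref{thm.main}; for the low cases one invokes $\operatorname{char}(k)=0$ or $>n$ together with the explicit description of fibers in the references cited in Remark~(5). Everything else is formal manipulation with the tower $K_{r,n}'\subseteq K_{r,n}^t\subseteq K_{r,n}$ and the order-$2$ automorphism $t$.
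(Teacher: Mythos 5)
Your proof is correct and uses essentially the same ingredients as the paper's: Theorem~\ref{thm.main} for separability (so fiber cardinality equals field degree), the order-$2$ action of $t$ and the inclusion~\eqref{e.Krn} for the tower argument, and the non-conjugacy of $(A_1,\dots,A_r)$ and $(A_1^t,\dots,A_r^t)$ in general position. The paper proves (a)$\Rightarrow$(b), (b)$\Leftrightarrow$(c), (c)$\Rightarrow$(a), whereas you run the cycle (a)$\Rightarrow$(c)$\Rightarrow$(b)$\Rightarrow$(a) with your (a)$\Rightarrow$(c) implicitly passing through (b) first, so the arrangement is slightly more redundant but logically sound.
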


\begin{proof} 
(a) $\Longrightarrow$ (b).
Theorem~\ref{thm.main} tells us that $K_{n, r}/K_{n, r}'$ is a finite 
separable extension. Thus the general fiber of $\overline{P}$
consists of exactly $[K_{r, n} : K_{r, n}']$ points. 

\smallskip
(b) $\Longleftrightarrow$ (c).
Under our assumptions on $r$ and $n$, $t$ is an automorphism of
$K_{r, n}$ of order $2$. Thus 
$[K_{r, n} : K_{r, n}^{t}] = 2$.
In view of~\eqref{e.Krn},
$[K_{r, n} : K_{r, n}'] \geqslant 2$, 
and equality holds if and only if $K_{r, n}'= K_{r, n}^t$.

\smallskip
(c) $\Longrightarrow$ (a).
If (c) holds, then a general fiber of $\overline{P}$ 
has exactly two elements. If such a fiber contains a point 
representing $A$, it also contains a point representing $A^t$.
For $A \in \Mat_n^r$ in general position, these points 
are distinct (here we are using the assumption that $(r, n) \neq (2, 2)$!),
so there cannot be any others.
\end{proof}

Our goal now is show that in the case where $r \geqslant n^2 - 1$, 
Theorem~\ref{thm.main} can be strengthened as follows. 

\begin{theorem} \label{thm.frobenius}
The equivalent conditions of Lemma~\ref{lem.conditions}
hold if $r \geqslant n^2 - 1$, for any $n \geqslant 2$.
\end{theorem}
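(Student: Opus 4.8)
The plan is to prove Theorem~\ref{thm.frobenius} by establishing condition (c) of Lemma~\ref{lem.conditions}, namely $K_{r,n}' = K_{r,n}^t$, in the range $r \geqslant n^2 - 1$. Since we already know $K_{r,n}' \subset K_{r,n}^t$ from~\eqref{e.Krn}, what remains is the reverse inclusion $K_{r,n}^t \subset K_{r,n}'$, or equivalently that every $t$-invariant matrix invariant is a rational function of the coefficients of the generalized characteristic polynomial $\det(x_0 I + x_1 A_1 + \dots + x_r A_r)$.

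First I would handle the base case $r = n^2 - 1$, which is where Frobenius's theorem enters directly. The idea is that for a generic $(A_1, \dots, A_r) \in \Mat_n^r$ with $r = n^2 - 1$, the matrices $I, A_1, \dots, A_r$ form a basis of $\Mat_n$; composing with a generic linear change of the $A_i$'s and a conjugation we may put ourselves in a normalized position. If $(A_1', \dots, A_r')$ is any other tuple with the same characteristic polynomial $P_A = P_{A'}$, then the linear map $\Mat_n \to \Mat_n$ sending $x_0 I + \sum x_i A_i \mapsto x_0 I + \sum x_i A_i'$ is a determinant-preserving linear isomorphism, so by Frobenius it has the form $X \mapsto PXQ$ or $X \mapsto PX^tQ$ with $\det P \det Q = 1$; using that it fixes $I$ we get $Q = P^{-1}$, so $(A_1', \dots, A_r')$ is conjugate to either $(A_1, \dots, A_r)$ or $(A_1^t, \dots, A_r^t)$. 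This shows the general fiber of $\overline{P}(n^2-1, n)$ has exactly two points (they are distinct since $(r,n) \neq (2,2)$, as noted in Remark (6) of the introduction), which is condition (a) of Lemma~\ref{lem.conditions} for $r = n^2 - 1$. I would need to be a little careful about the passage from ``linear transformations preserving $\det$ on all of $\Mat_n$'' to ``preserving $\det$ on the subspace we care about,'' but since $I, A_1, \dots, A_r$ span $\Mat_n$ when $r = n^2 - 1$ and the $A_i$ are generic, there is no subspace issue.

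Next I would propagate from $r = n^2 - 1$ up to all larger $r$ by an argument parallel to Proposition~\ref{prop.r++}. The point is that by Lemma~\ref{lem.procesi} (Procesi), $K_{r,n}$ for $r \geqslant 3$ is generated over $k$ by the elements $\tr(M_i)$ and $\tr(M_i A_j)$ where the $M_i$ are monomials in $A_1, A_2$ and $j$ ranges over $3, \dots, r$; in fact one can choose a similar generating set built from monomials in any two of the variables. The subtlety is that condition (c), unlike the separability condition in Proposition~\ref{prop.r++}, concerns the $t$-fixed subfield, so I would instead argue directly with fibers: knowing that the fiber of $\overline{P}(r', n)$ over a general point consists of exactly two points for some $r' \geqslant n^2 - 1$ (namely the classes of $A$ and $A^t$), I want to deduce the same for $r' + 1$. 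Given a general $(A_1, \dots, A_{r'+1})$ and another tuple $(A_1', \dots, A_{r'+1}')$ with the same characteristic polynomial, restricting to the first $r'$ coordinates (and noting the first $r'$ of the $A_i$ remain general) forces $(A_1', \dots, A_{r'}')$ to be conjugate to $(A_1, \dots, A_{r'})$ or to $(A_1^t, \dots, A_{r'}^t)$; after conjugating we may assume it equals one of these, and then since $I, A_1, \dots, A_{r'}$ already span $\Mat_n$, the matrix $A_{r'+1}'$ is determined up to the residual stabilizer, which is trivial in the first case and given by transposition-compatible conjugation in the second. This pins the fiber down to exactly $\{[A], [A^t]\}$ again.

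The main obstacle I anticipate is the careful bookkeeping in the inductive step: making precise the claim that once $I, A_1, \dots, A_{r'}$ span $\Mat_n$, the constraint $P_A = P_{A'}$ together with $(A_1', \dots, A_{r'}') \sim (A_1, \dots, A_{r'})$ leaves no freedom in $A_{r'+1}'$ beyond transposition. Concretely, if $(A_1', \dots, A_{r'}') = (A_1, \dots, A_{r'})$ exactly, then $\det(x_0 I + \sum_{i=1}^{r'} x_i A_i + x_{r'+1} A_{r'+1}') = \det(x_0 I + \sum_{i=1}^{r'} x_i A_i + x_{r'+1} A_{r'+1})$ for all $x$, and I must extract from this that $A_{r'+1}' = A_{r'+1}$ — this should follow by differentiating in $x_{r'+1}$ at $x_{r'+1}=0$ and using Lemma~\ref{lem.differential}(a) together with the fact that the bilinear form $(X, Y) \mapsto \tr(X^{ad} Y)$, evaluated at the generic $X = x_0 I + \sum x_i A_i$, is nondegenerate enough to recover $Y$ over the field $k(x_0, \dots, x_{r'})$. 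The transpose case is handled by the same computation after pre-applying $t$. Assembling these pieces gives condition (a) of Lemma~\ref{lem.conditions} for all $r \geqslant n^2 - 1$, which by that lemma is equivalent to (b) and (c), completing the proof.
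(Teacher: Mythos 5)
Your proof is correct and takes a genuinely different route from the paper in the inductive step, while the base case coincides. The paper also reduces to $r = n^2-1$ (their Lemma~\ref{lem.frobenius}, which is exactly your Frobenius-based argument), but then propagates to larger $r$ via a field-theoretic argument (Proposition~\ref{prop.r+++}): using Procesi's generating set $\tr(M_i)$, $\tr(M_i A_j)$ together with a cleverly chosen element $f \in K_{2,n}$ satisfying $t(f)=-f$, they write each generator of $K_{r+1,n}^t$ as a combination of $t$-symmetric pieces each involving only three of the $A_j$'s. That trick requires $n \geqslant 3$ (no such $f$ exists for $n=2$), which forces them to dispose of $n=2$ separately by an explicit $\Orth_3$-invariant-theory computation (their Lemma~\ref{lem.frobenius2}). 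Your inductive step instead works directly with fibers: given $P_A = P_{A'}$ with $(A_1,\dots,A_{r'+1})$ general and $r' \geqslant n^2-1$, you set $x_{r'+1}=0$ to conclude (via the induction hypothesis, in the form of condition (a)) that $(A_1',\dots,A_{r'}')$ is conjugate to the original tuple or its transpose; after conjugating you pin down $A_{r'+1}'$ by differentiating the determinant in $x_{r'+1}$, using that $Y^{ad}$ sweeps out enough of $\Mat_n$ (since $I, A_1,\dots,A_{r'}$ span $\Mat_n$) to force $\tr(Y^{ad}(A_{r'+1}'-A_{r'+1}))\equiv 0 \Rightarrow A_{r'+1}'=A_{r'+1}$. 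This is elementary and uniform in $n \geqslant 2$, so you avoid the separate $n=2$ case entirely — an actual simplification. The trade-offs: your step crucially needs $r' \geqslant n^2-1$ (the spanning condition), so it only propagates from the Frobenius base case and could not replace the paper's Proposition~\ref{prop.r+++} in a setting where one only knows the result for small $r$; also, when you pass from "two points in the fiber of $\overline P$" to "any $A'$ with $P_{A'}=P_A$ is conjugate to $A$ or $A^t$," you are implicitly using that for general $A$ the $\PGL_n$-orbit is closed of maximal dimension, so that a coincidence in the categorical quotient really is a conjugacy — this is standard but deserves a sentence.
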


The rest of this section will be devoted to proving 
Theorem~\ref{thm.frobenius}. We proceed in three steps.
(1) Lemma~\ref{lem.frobenius2} settles the case, where $n = 2$,
(2) Lemma~\ref{lem.frobenius} settles the case, where $r = n^2 - 1$, and
(3) Proposition~\ref{prop.r+++} supplies the induction step, showing that
if the equivalent conditions of Lemma~\ref{lem.conditions}
hold for some parameters $r$ and $n$, then they also hold for
$r+ 1$ and $ n$, provided that $r, n \geqslant 3$.

\begin{lemma} \label{lem.frobenius2} Assume $r \geqslant 2$. Then

\smallskip
(a) $K_{r, 2}' = k(\tr(A_i), \tr(A_i A_j) \, | \, i, j = 1, \dots, r)$.

\smallskip
(b) $K_{r, 2}' = K_{r, 2}^t$.
\end{lemma}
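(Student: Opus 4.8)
The plan is to handle part (a) by an explicit generator computation for $2\times2$ matrices, and part (b) as a direct consequence. For part (a), recall from Remark~(6) in the Introduction that the invariant ring $k[\Mat_2^r]^{\PGL_2}$ is generated by the traces $\tr(A_i)$, $\det(A_i)$ (equivalently, by Newton's identities, $\tr(A_i^2)$), and the traces of products $\tr(A_iA_j)$ — this is the classical description of the trace algebra of $r$ generic $2\times2$ matrices (see \cite[Theorem 2.1]{procesi}, \cite[p.~20]{herstein}, \cite[Lemma 1(1)]{fhl}). Therefore $K_{r,2} = k\bigl(\tr(A_i),\ \tr(A_i^2),\ \tr(A_iA_j) \mid 1\leqslant i<j\leqslant r\bigr)$. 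It thus suffices to show that each of the generators $\tr(A_i)$ and $\tr(A_iA_j)$ lies in $K_{r,2}'$, and that conversely $K_{r,2}'$ is contained in the field they generate.

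The first inclusion is the substantive one. The field $K_{r,2}'$ is generated by the coefficients of $\det(x_0I + x_1A_1 + \cdots + x_rA_r)$ as a polynomial in $x_0,\dots,x_r$. For a $2\times 2$ matrix $M$ one has $\det(x_0 I + M) = x_0^2 + x_0\tr(M) + \det(M)$; applying this with $M = x_1A_1 + \cdots + x_rA_r$, the coefficient of $x_0 x_i$ is $\tr(A_i)$, so every $\tr(A_i)\in K_{r,2}'$. The coefficient of $x_0^0$ is $\det(x_1A_1+\cdots+x_rA_r)$, whose expansion in the $x_i$ involves the $\det(A_i)$ (coefficient of $x_i^2$) and, crucially, terms in $x_ix_j$ whose coefficient is $\det(A_i+A_j)-\det(A_i)-\det(A_j) = \tr(A_i)\tr(A_j) - \tr(A_iA_j)$ (using $\det(A_i+A_j) + \tr(A_iA_j) = \det A_i + \det A_j + \tr(A_i)\tr(A_j)$, valid in dimension $2$). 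Since $\tr(A_i),\tr(A_j)\in K_{r,2}'$ already, it follows that $\tr(A_iA_j)\in K_{r,2}'$. Hence $K_{r,2}$, being generated by these traces (and the $\det(A_i)$, which are polynomials in $\tr(A_i)$ and $\tr(A_i^2)$ — but note $\det(A_i)$ is directly a coefficient of the characteristic polynomial above, so also in $K_{r,2}'$), equals the field in part (a); and the reverse inclusion is immediate since all listed generators are manifestly invariant, i.e.\ lie in $K_{r,2}$. This proves (a).

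For part (b), I would simply observe that the generating set $\{\tr(A_i),\ \tr(A_iA_j)\}$ for $K_{r,2}'$ from part (a) is stable under the transposition map $t$: indeed $t(\tr(A_i)) = \tr(A_i^t) = \tr(A_i)$ and $t(\tr(A_iA_j)) = \tr(A_i^tA_j^t) = \tr((A_jA_i)^t) = \tr(A_jA_i) = \tr(A_iA_j)$. Thus $t$ fixes every generator of $K_{r,2}'$, so $K_{r,2}' \subseteq K_{r,2}^t$; combined with the general inclusion $K_{r,2}'\subseteq K_{r,2}^t$ of~\eqref{e.Krn} this is no new information, so instead I use it the other way: the inclusion $K_{r,2}'\supseteq K_{r,2}^t$ must be checked, i.e.\ I must show every $t$-invariant element of $K_{r,2}$ is already a coefficient-field element. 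This follows because $[K_{r,2}:K_{r,2}^t] = 2$ (for $(r,n)\neq(2,2)$, as noted before Lemma~\ref{lem.conditions}, $t$ is a nontrivial involution on $K_{r,2}$), while $[K_{r,2}:K_{r,2}']$ is at least $2$ by~\eqref{e.Krn} and the fact that generic $(A_1,\dots,A_r)$ and $(A_1^t,\dots,A_r^t)$ are non-conjugate; since $K_{r,2}'\subseteq K_{r,2}^t$, the only way to reconcile the degrees is $K_{r,2}' = K_{r,2}^t$, provided I first establish $[K_{r,2}:K_{r,2}']\leqslant 2$ — but that is exactly the content of part (a) together with the observation that $K_{r,2}$ is generated over $K_{r,2}'$ by $\tr(A_i^2)$ (equivalently $\det A_i$), which we have just shown lies in $K_{r,2}'$, so in fact $K_{r,2}=K_{r,2}'$ when $n=2$. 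Wait — that would force the degree to be $1$, contradicting non-conjugacy; so the correct conclusion is that the generating computation of (a) already exhibits $K_{r,2}' = K_{r,2}^t$ directly, and the main obstacle is precisely pinning down which traces do and do not survive in $K_{r,2}'$: the traces $\tr(A_iA_j)$ are recovered only in the combination $\tr(A_i)\tr(A_j)-\tr(A_iA_j)$, and one must be careful that this, together with the already-available $\tr(A_i)$, genuinely generates the symmetric part but not more. I expect the bookkeeping of exactly this point — confirming $K_{r,2}'$ equals the $t$-fixed field and not the whole field — to be the crux of the argument.
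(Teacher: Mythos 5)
The proposal gets part (a) essentially right: the explicit coefficient computation showing that $\tr(A_i)$, $\det(A_i)$, and $\tr(A_i)\tr(A_j)-\tr(A_iA_j)$ are coefficients of $\det(x_0I + \sum x_iA_i)$ correctly gives $K_{r,2}' = k(\tr(A_i),\ \tr(A_iA_j))$. (The paper's own route is slightly slicker --- it reduces to the coefficients of $\tr(X^2)$ via $\det(X) = \tfrac12(\tr(X)^2 - \tr(X^2))$ --- but your version is fine.)

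Part (b), however, has a genuine gap, and it traces back to a misremembered fact. You assert that $k[\Mat_2^r]^{\PGL_2}$ is generated by $\tr(A_i)$, $\tr(A_i^2)$, and $\tr(A_iA_j)$ for all $r$, citing Remark~(6); but that remark is stated \emph{only for $r=n=2$}. For $r\geqslant 3$ the trace ring of $r$ generic $2\times 2$ matrices requires the triple traces $\tr(A_iA_jA_k)$ as additional generators. With the false premise in place, your argument shows $K_{r,2}=K_{r,2}'$, which (as you yourself notice) would force $\deg\overline{P}=1$, contradicting the non-conjugacy of $(A_1,\dots,A_r)$ and $(A_1^t,\dots,A_r^t)$. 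The resolution is exactly the missing triple traces: $\tr(A_iA_jA_k)\in K_{r,2}$ but its $t$-image is $\tr(A_iA_kA_j)$, so generically $\tr(A_iA_jA_k)\notin K_{r,2}^t$, and $K_{r,2}\neq K_{r,2}'$ after all. Once this is corrected, the substantive content of (b) is the identity $K_{r,2}^t = k(\tr(A_i),\tr(A_iA_j))$ --- i.e.\ that the $t$-fixed subfield is \emph{exactly} the field you computed in (a), with no extra $t$-symmetric combinations of triple traces surviving independently. Your proposal never establishes this; the degree argument you gesture at is circular (it presupposes the inclusion $K_{r,2}^t\subseteq K_{r,2}'$ that needs to be proved). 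The paper supplies the missing ingredient via classical invariant theory: identify the conjugation-plus-transposition group acting on trace-zero $2\times 2$ matrices with $\Orth_3$ acting on a $3$-dimensional quadratic space, then invoke the First Fundamental Theorem to see that the $\Orth_3$-invariants of $V_0^r\oplus V^r$ are generated by the $\tr(A_i)$ and the pairings $\tr(A_iA_j)$. Without some argument of this kind --- or, alternatively, a direct verification that each $t$-symmetrized triple trace $\tr(A_iA_jA_k)+\tr(A_iA_kA_j)$ is a polynomial in the $\tr(A_i)$ and $\tr(A_iA_j)$, \emph{and} that these together with the quadratic traces generate $K_{r,2}^t$ --- part (b) remains unproved.
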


\begin{proof} (a) Recall that  
$K_{r, n}'$ is generated over $k$ by the coefficients of 
$\det(x_0 I + x_1 A_1 + \dots + x_r A_r)$, where $I$ is the $2 \times 2$ 
identity matrix.
Setting $X := x_0I + x_1 A_1 + \dots + x_r A_r$ and
using the formula $\det(X) = \dfrac{1}{2}(\tr(X)^2 - \tr(X^2))$, we 
see that $K_{r, 2}'$ is generated over $k(\tr(A_i) \, | \, i = 1, \dots, r)$
by the coefficients of $\tr(X^2)$, and part (a) follows.

(b) Let $V$ be the $3$-dimensional subspace of trace zero $2 \times 2$ 
matrices, equipped with the non-degenerate quadratic form $q(A, B) = \tr(AB)$.
Then the representation $\PGL_2 \to \GL(V)$ given by the conjugation
action is an isomorphism between $\PGL_2$ and $\SO(V) \simeq \SO_3$.
The transposition map $t \colon V \to V$ also preserves 
the trace form;
the subgroup $G$ of $\GL(V) \simeq \SO_3$ generated by 
$\PGL_2$ and $t$ is easily seen to be the full orthogonal group
$\Orth(V)$. 
Now observe that by definition,
$K_{r, 2}^t = k(\Mat_2^r)^G$. Let us identify
$\Mat_2$ with $V_0 \oplus V$, via the isomorphism
\[ A \to (\tr(A), A - \frac{1}{2} \tr(A)) \, . \] 
Here $V_0$ denotes the $1$-dimensional
trivial representation of $G$. 
This identifies $K_{r, 2}^{t}$ 
with the field of $\Orth(V)$-invariants of 
$V_0^r \oplus V^r$.  The First Fundamental Theorem of classical 
invariant theory tells us that the field of invariants 
is generated by $k(V_0^r)$ and the functions
\[ (t_1, \dots, t_r, v_1, \dots, v_r) \mapsto q(v_i, v_j) \, , \]
where $t_1, \dots, t_r \in V_0$, $v_1, \dots, v_r \in V$; see, 
e.g.,~\cite[Theorem 5.7]{dCP}.  Remembering
our identification between $\Mat_n$ and $V_0 \oplus V$, 
we readily translate this into
\[ K_{r, 2}^t = k(\tr(A_i), \tr(A_i A_j) \, | \, i, j = 1, \dots, r). \]
The desired equality, $K_{r, 2}' = K_{r, 2}^t$ now follows from part (a).
\end{proof}

\begin{lemma} \label{lem.frobenius} Let $r = n^2 - 1$ and assume that
$I_1, A_1, \dots, A_r$ span $\Mat_n$ as a $k$-vector space. If 
\[ \det(x_0 I + x_1 A_1 + \dots + x_r A_r) = 
\det(x_0 I + x_1 B_1 + \dots + x_r B_r) \]
for some $B = (B_1, \dots, B_r) \in \Mat_n^r$, then $B$ is conjugate to
$A$ or $B$ is conjugate to $A^t$.
\end{lemma}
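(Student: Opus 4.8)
The plan is to reduce this to Frobenius's theorem on determinant-preserving linear maps. The key observation is that when $r = n^2 - 1$ and $I, A_1, \dots, A_r$ span $\Mat_n$, the tuples $A$ and $B$ both determine linear isomorphisms $\bbA^{r+1} = \langle x_0, \dots, x_r\rangle \to \Mat_n$, sending $(x_0, \dots, x_r)$ to $x_0 I + \sum x_i A_i$ and to $x_0 I + \sum x_i B_i$ respectively. The first map is invertible by the spanning hypothesis; composing the inverse of the first with the second gives a linear map $T \colon \Mat_n \to \Mat_n$ with $T(I) = I$. The hypothesis that the two determinantal forms agree says precisely that $\det(T(X)) = \det(X)$ for all $X$ in the image of the first map, hence (by linearity and the fact that this image is all of $\Mat_n$) for all $X \in \Mat_n$.

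Now Frobenius's theorem, quoted in Remark (1) of the Introduction, applies: since $T$ preserves the determinant, there exist fixed matrices $P, Q$ with $\det(P)\det(Q) = 1$ such that either $T(X) = PXQ$ for all $X$, or $T(X) = PX^tQ$ for all $X$. Using $T(I) = I$ we get $PQ = I$, so $Q = P^{-1}$, and $T$ is conjugation $X \mapsto PXP^{-1}$ in the first case, or $X \mapsto PX^tP^{-1}$ in the second. Unwinding the definition of $T$: in the first case $x_0 I + \sum x_i B_i = P(x_0 I + \sum x_i A_i)P^{-1}$ for all $(x_0, \dots, x_r)$, and comparing coefficients of each $x_i$ gives $B_i = PA_iP^{-1}$ for all $i$, i.e. $B$ is conjugate to $A$. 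In the second case, $B_i = PA_i^tP^{-1}$ for all $i$, so $B$ is conjugate to $A^t$. This is exactly the conclusion.

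The main technical point to handle carefully — really the only one — is the passage from "$\det(T(X)) = \det(X)$ for $X$ ranging over the specific affine/linear image" to "$\det(T(X)) = \det(X)$ for all $X \in \Mat_n$," and the verification that the hypothesis genuinely gives this polynomial identity. Since $I, A_1, \dots, A_r$ span $\Mat_n$ and there are $n^2 = r+1$ of them, they form a basis; thus $(x_0, \dots, x_r) \mapsto x_0 I + \sum x_i A_i$ is a linear isomorphism $k^{r+1} \larrowsim \Mat_n$, and the stated equality of determinantal polynomials in $k[x_0, \dots, x_r]$ transports, under this isomorphism, to the polynomial identity $\det(T(X)) = \det(X)$ in $k[\Mat_n]$. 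One should also note that the characteristic hypothesis ($\Char k = 0$ or $> n$) is not needed here — Frobenius's theorem as cited (e.g. \cite{waterhouse}, \cite{bgl}) holds over an arbitrary field — but it does no harm, and in any case we have assumed $k$ algebraically closed, which is more than enough. Finally, $B$ being conjugate to $A$ (resp. to $A^t$) is meant componentwise by a single $P \in \GL_n$, which is exactly what the coefficient comparison produces, so no further bookkeeping is required.
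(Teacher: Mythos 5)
Your proof is correct and follows essentially the same route as the paper's: define the linear map $T$ on $\Mat_n$ by $I \mapsto I$, $A_i \mapsto B_i$, observe that the hypothesis forces $T$ to preserve the determinant, then invoke Frobenius's theorem and use $T(I) = I$ to reduce to conjugation or conjugation-composed-with-transpose. You spell out the justification (that $I, A_1, \dots, A_r$ form a basis, so $T$ is well defined and the polynomial identity transfers) a bit more explicitly than the paper does, but the argument is the same.
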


\begin{proof} Let $T \colon \Mat_n \to \Mat_n$ be the linear transformation
taking $I$ to $I$ and $A_i$ to $B_i$ for every $i = 1, \dots, r$. 
By our assumption $T$ preserves the determinant function. By a theorem
of Frobenius, there exist $P, Q \in \Mat_n$ such that $\det(P) \det(Q) = 1$
and  $T(X) = C X D$; see the references in Remark (1) in the Introduction.
Since $T(I) = I$, we have $C = D^{-1}$, and the lemma follows.
\end{proof}

\begin{proposition} \label{prop.r+++} 
Assume $r, n \geqslant 3$. 
If $K_{r}' = K_{r, n}^t$, then $K_{r + 1}' = K_{r+1, n}^t$.
\end{proposition}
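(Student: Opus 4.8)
The plan is to mimic the argument of Proposition~\ref{prop.r++} in Section~\ref{sect.r=3}, replacing $K_{r,n}'$ there with $K_{r,n}^t$ here. First I would record the two inclusions we need for bookkeeping: $K_{r,n}^t \subset K_{r+1,n}^t$ (obtained, as before, by setting $A_{r+1}=0$, since transposition commutes with this specialization) and, from~\eqref{e.Krn}, $K_{r+1,n}' \subset K_{r+1,n}^t$. The hypothesis $K_r' = K_{r,n}^t$ combined with $K_r' \subset K_{r+1}'$ gives $K_{r,n}^t \subset K_{r+1}' \subset K_{r+1,n}^t$.

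Next I would invoke Lemma~\ref{lem.procesi} (Procesi): since $r+1 \geqslant 3$, there are finitely many monomials $M_1,\dots,M_N$ in $A_1$ and $A_2$ such that $K_{r+1,n}$ is generated over $k$ by the elements $\tr(M_i)$ and $\tr(M_i A_j)$ for $i = 1,\dots,N$ and $j = 3,\dots,r+1$. It therefore suffices to show each of these generators lies in $K_{r+1,n}'$; indeed, if so, then $K_{r+1,n} \subset K_{r+1,n}' \subset K_{r+1,n}^t \subset K_{r+1,n}$, forcing $K_{r+1}' = K_{r+1,n}^t$.

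For the generators $\tr(M_i)$ and $\tr(M_i A_j)$ with $j \leqslant r$: these all lie in $K_{r,n}$, hence (applying $t$) $\tr(M_i^t)$ and $\tr(A_j^t M_i^t) = \tr(M_i A_j)$—more carefully, $t$ maps $\tr(M_i)$ to $\tr(M_i^t)$ and $\tr(M_i A_j)$ to $\tr((M_i A_j)^t)$, all of which lie in $K_{r,n}$, so the original elements lie in $K_{r,n}^t$. By hypothesis $K_{r,n}^t = K_r' \subset K_{r+1}'$, so these generators lie in $K_{r+1}'$. It remains to handle the new generators $\tr(M_i A_{r+1})$. By the symmetry of Procesi's statement in the roles of the matrices $A_3,\dots,A_{r+1}$, I would instead re-apply Lemma~\ref{lem.procesi} after permuting indices so that $A_{r+1}$ plays the role of $A_3$ (this is where the hypothesis $r \geqslant 3$, hence $r+1 \geqslant 4$, guarantees at least two further matrices $A_1,A_2$ remain to build monomials from, or rather that $A_{r+1}$ together with two others suffices): concretely, $\tr(M_i A_{r+1}) \in K_{3,n}$ for a suitable copy of $K_{3,n}$ sitting inside $K_{r+1,n}$ built from three of the matrices one of which is $A_{r+1}$. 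Since that $K_{3,n}$ is contained in some $K_{r,n}$ (omitting at least one of $A_1,\dots,A_{r+1}$, using $r \geqslant 3$), the same transposition-invariance argument applies, placing $\tr(M_i A_{r+1})$ in $K_{r,n}^t = K_r' \subset K_{r+1}'$.

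The main obstacle is the final step: carefully arranging the indices so that Procesi's generators involving $A_{r+1}$ are seen to lie in a sub-field $K_{r,n}$ on fewer matrices, which is exactly where the hypothesis $r \geqslant 3$ (not merely $r \geqslant 2$) is used—one needs room to drop a matrix while still having three matrices, one being $A_{r+1}$, available. I expect this to require spelling out the symmetry in Lemma~\ref{lem.procesi} explicitly (the monomials $M_i$ may be taken in any two of the matrices, and the linear factor in any third), analogously to the ``by symmetry'' remark at the end of the proof of Proposition~\ref{prop.r++}. The rest is a routine diagram chase through the inclusions.
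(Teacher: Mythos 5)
Your proposal has a fundamental gap that makes the overall strategy impossible, not merely incomplete. You claim that it ``suffices to show each of these generators lies in $K_{r+1,n}'$,'' where the generators are Procesi's $\tr(M_i)$ and $\tr(M_i A_j)$, which generate $K_{r+1,n}$. But if all of them lay in $K_{r+1,n}'$ you would conclude $K_{r+1,n} = K_{r+1,n}'$, which is false: $[K_{r+1,n} : K_{r+1,n}'] \geqslant 2$ because $t$ is a nontrivial automorphism fixing $K_{r+1,n}'$ (this is the content of~\eqref{e.Krn} and the discussion around Lemma~\ref{lem.conditions}). The target of the proposition is $K_{r+1,n}' = K_{r+1,n}^t$, a strict subfield of $K_{r+1,n}$, so the generators of $K_{r+1,n}$ are the wrong objects to chase into $K_{r+1,n}'$.

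The specific erroneous step is: ``$t$ maps $\tr(M_i)$ to $\tr(M_i^t)$\ldots all of which lie in $K_{r,n}$, so the original elements lie in $K_{r,n}^t$.'' That does not follow. Since $t$ is an automorphism of $K_{r,n}$, the image $t(x)$ lies in $K_{r,n}$ for every $x \in K_{r,n}$; this is automatic and has nothing to do with being \emph{fixed} by $t$. The fixed field $K_{r,n}^t$ consists of those $x$ with $t(x) = x$, and for a generic monomial $M_i$ one has $\tr(M_i^t) = \tr(M_i^{\mathrm{rev}}) \neq \tr(M_i)$ (indeed the paper's own nonzero element $f = \tr(A_1 A_2 A_1^2 A_2^2) - \tr(A_2^2 A_1^2 A_2 A_1)$ is exactly such a difference). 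The paper's proof works around this by replacing the Procesi generators with their $t$-symmetrizations $s_i, s_{ij}$ and $t$-antisymmetrizations $\Delta_i, \Delta_{ij}$, and then forming the $t$-invariant products $\Delta_i f$, $\Delta_{ij} f$, $f^2$ with the skew element $f$. One checks that $K_{r+1,n}^t$ is generated by $s_i, \Delta_i f, s_{ij}, \Delta_{ij} f, f^2$, and only \emph{these} invariant combinations are then pushed into $K_{r+1,n}'$ via the reduction to lower $r$. The symmetry and index-shuffling parts of your proposal are fine as far as they go, but they cannot rescue an argument that is aimed at proving the wrong equality.
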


\begin{proof} This proposition is in the same spirit as
Proposition~\ref{prop.r++}, and we will use a more elaborate 
version of the same argument. Once again, a key ingredient
will be supplied by Lemma~\ref{lem.procesi}, which asserts 
that there exist finitely many monomials
$M_1, \ldots, M_N$ in $A_1$ and $A_2$ such that $K_{r, n}$ is generated, 
as a field extension of $k$, by the elements 
$\tr(M_i)$ and $\tr(M_i A_j)$, where $i = 1, \dots, N$, and $j = 3, \dots, r$.
To simplify the notation, set
\begin{gather} 
\nonumber
s_i := \tr(M_i) + \tr(M_i)^t, \\
\nonumber
\Delta_i := \tr(M_i) - \tr(M_i)^t, \\
\nonumber
s_{i, j} := \tr(M_i A_j) + \tr(A_j M_i)^t, \\
\nonumber
\Delta_{i, j} := \tr(M_i A_j) - \tr(A_j M_i)^t.
\end{gather}
We will also need a non-zero element $f \in K_{2, n}$ with the property
that $t(f) = -f$. Such an element exists for every $n \geqslant 3$;
for example, we can take
\[ f(A_1, A_2) := \tr(A_1A_2A_1^2 A_2^2) -
\tr(A_2^2 A_1^2 A_2 A_1). \]
For this choice of $f$, the equality $t(f) = - f$ is clear;
the computation on~\cite[p.~72]{reichstein} shows that $f \neq 0$.
(Note that here we are using the assumption that $n \geqslant 3$.
For $n = 2$, $f$ cannot exist because $t$ acts trivially on $K_{2, n}$,
and our argument below breaks down.  This is the reason
we handled the case where $n = 2$ separately, 
in Lemma~\ref{lem.frobenius2}.) Now
\begin{align} 
\nonumber
K_{r+1, n}^t & = & k(\tr(M_i), \tr(M_i A_j) \, | \, i = 1, \dots, N, \,
j = 3, \dots, r+1 ) \\ 
\nonumber
   & = & k(s_i, \Delta_i, s_{ij}, \Delta_{ij}) \, 
| \, i = 1, \dots, N, \,   j = 3, \dots, r+1 )^t \\ 
\nonumber
 & = & k(s_i, \Delta_i f , s_{ij}, \Delta_{ij}f, f ) \, 
| \, i = 1, \dots, N, \,   j = 3, \dots, r+1 )^t  
\end{align}
The elements
$s_i$, $\Delta_i f$ , $s_{ij}$, $\Delta_{ij}f$ are all fixed by
$t$, while $t(f) = - f$. Thus
\begin{equation} \label{e++}
K_{r+1, n}^t = k(s_i, \Delta_i f , s_{ij}, \Delta_{ij}f, f^2 ) . 
\end{equation}
Clearly $K_{r+1, n}' \subset K_{r+1, n}^t$. To prove equality, 
it suffices to show that each of the generators
$s_i, \Delta_i f , s_{ij}, \Delta_{ij}f$ and $f^2$ lie in $K_{r+1, n}'$.
 
Note that $s_i$, $\Delta_i f$ and $f^2$ lie in $K_{2, n}^t$, and
$s_{i3}$ and $\Delta_{i3} f$ lie in $K_{3, n}^t$. Since $r \geqslant 3$, 
these elements all lie in $K_{r, n}^t$. By our assumption, 
$K_{r, n}^t = K_{r, n}' \subset K_{r+1, n}'$. Hence, each of the generators
$f^2, s_i, \Delta_i f , s_{i3}, \Delta_{i3}f$ lie in $K_{r+1, n}'$. By symmetry, 
$s_{ij}$ and $\Delta_{ij}f$ also lie in $K_{r+1, n}'$, for any
$j = 3, \dots, r + 1$. We conclude that
$f^2, s_i, \Delta_i f , s_{ij}, \Delta_{ij}f$ all lie in
$K_{r+1, n}'$. By~\eqref{e++}, $K_{r+1, n}^t = K_{r+1, n}'$, as desired.
\end{proof}

\bibliographystyle{amsalpha} 
\def\cprime{$'$}
\providecommand{\bysame}{\leavevmode\hbox to3em{\hrulefill}\thinspace}
\providecommand{\MR}{\relax\ifhmode\unskip\space\fi MR }
\providecommand{\MRhref}[2]{%
  \href{http://www.ams.org/mathscinet-getitem?mr=#1}{#2}
}
\providecommand{\href}[2]{#2}

\end{document}